\newtheorem{theorem}{THEOREM}[section]
\newtheorem{lemma}{LEMMA}[section]
\newtheorem{definition}{DEFINITION}[section]
\newtheorem{remark}{REMARK}[section]
\newtheorem{corollary}{COROLLARY}[section]
\numberwithin{equation}{section}
\title{\textbf{On the completeness of Gaussians in a Hilbert functional space.}}
\author{\textbf{Victor Katsnelson}}
\date{}
\begin{document}

\maketitle

\vspace{-3.5ex}
\thanks{ \centerline{\footnotesize
Department of Mathematics, Weizmann Institute}}
\thanks{\footnotesize\centerline{ Rehovot, 7610001, Israel}}
\thanks{\footnotesize \centerline{
e-mail: \texttt{victor.katsnelson@weizmann.ac.il, victorkatsnelson@gmail.com}}}

\thispagestyle{empty}

\vspace{2.0ex}
\begin{abstract}
\noindent
Let \(w_{T}(t)\) and \(w_{\Omega}(\omega)\) be functions defined for \(t\in\mathbb{R}\)
and \(\omega\in\mathbb{R}\) respectively, where \(\mathbb{R}=(-\infty,\infty)\). The functions \(w_{T}(t)\) and
\(w_{\Omega}(\omega)\) are assumed to be non-negative everywhere and bounded away from
zero outside some sets of finite Lebesgue measure. Moreover some regularity conditions are posed
on the these functions. Under these regularity conditions, the functions \(w_{T}\) and
\(w_{\Omega}\) are locally bounded and
 grow not faster than exponentially.
We associate the inner product space \(\mathcal{H}_{w_T,w_{\Omega}}\) with the
functions \(w_{T}\) and \(w_{\Omega}\). The space \(\mathcal{H}_{w_T,w_{\Omega}}\) consists of those
functions \(x\)  for which
\(\int\limits_{-\infty}^{\infty}|x(t)|^2w_{T}(t)dt+
\int\limits_{-\infty}^{\infty}|\hat{x}(\omega)|^2w_{\Omega}(\omega)d\omega=
\|x\|^2_{\mathcal{H}_{w_T,w_{\Omega}}}<\infty\), where \(\hat{x}\)
is the Fourier transform of the function \(x\).
We show that the system of Gaussians
\(\big\lbrace\exp(-\alpha(t-\tau)^2)\big\rbrace\), where \(\alpha\) runs over \(\mathbb{R}^{+}=(0,+\infty)\) and \(\tau\) runs over \(\mathbb{R}\),
is a complete system in the space \(\mathcal{H}_{w_T,w_{\Omega}}\).

\end{abstract}

\vspace{2.0ex}
\noindent
{\small\textbf{Mathematics Subject Classification\ (2010).}\ Primary 46E20, 46E35; \ Secondary
41A28, 42B10.}

\vspace{1.0ex}
\noindent
{\small\textbf{Keywords:}\
 Fourier transform,  Hilbert functional space, Sobolev space, Gaussians.}

\normalsize
\vspace{3.0ex}
\noindent
\textsf{Notation:}\\
\(\mathbb{R}\) \ \ - the real axis, i.e. the set of all real numbers.\\
\(\mathbb{R}_{+}\) - the set of all \emph{strictly} positive real numbers.\\
\(\mathbb{N}\) \ \ - the set of all natural numbers.\\
\(d\xi,\,dt,\,d\omega\) - \begin{minipage}[t]{0.8\linewidth}
the normalized Lebesgue measure on the real axis \(\mathbb{R}\) (if the variable on \(\mathbb{R}\) is denoted by \(\xi,\,t,\,\omega\) respectively.)
\end{minipage}
\section{The space \(\boldsymbol{\mathcal{H}_{w_T,w_{\Omega}}}\).}
\subsection{The space \(\boldsymbol{L^2\pmb{(}\pmb{\mathbb{R}}, w\pmb{(}\xi\pmb{)}d\xi\pmb{)}}\).}
Let \(w(\xi)\) be a function defined  for all \(\xi\in\mathbb{R}\). In what follow we always assume that \(w(\xi)\) satisfies the conditions:
\begin{equation}
\label{NNe}
0\leq{}w(\xi)<\infty,\ \ \forall\,\xi\in\mathbb{R}.
\end{equation}
\centerline{\textit{A function \(w(\xi)\) satisfying the condition  \eqref{NNe} is said to be \textsf{the weight function}.}}
We interpret the variable \(\xi\) either as the time variable \(t\), or  as the frequency
variable~\(\omega\).
\begin{definition}
\label{defL2}
We associate the space \(L^2(\mathbb{R},w(\xi)d\xi)\) with the weight function \(w(\xi)\).
This is the space of all functions \(x(\xi)\) which are defined \(d\xi\)-almost everywhere on \(\mathbb{R}\), take values in the set \(\mathbb{C}\) and satisfy the condition
\begin{subequations}
\label{DefL2}
\begin{gather}
\label{DefL2a}
\|x(\xi)\|_{L^2(\mathbb{R},w(\xi)d\xi)} <\infty,\\[-3.0ex]
\intertext{where}
\notag\\[-6.5ex]
\label{DefL2b}
\|x(\xi)\|_{L^2(\mathbb{R},w(\xi)d\xi)}=\bigg(\int\limits_{\mathbb{R}}|x(\xi)|^2w(\xi)\,d\xi\bigg)^{1/2}.
\end{gather}
\end{subequations}
The functions \(x^{\prime}(\xi)\) and \(x^{\prime\prime}(\xi)\)
 such that \(\|x^{\prime}(\xi)-x^{\prime\prime}(\xi)\|_{L^2(\mathbb{R},w(\xi)d\xi)}=0\)
 determine the same element of the space \(L^2(\mathbb{R},w(\xi)d\xi)\).
\end{definition}
\vspace{2.0ex}
\noindent
 The space \(L^2(\mathbb{R},w(\xi)d\xi)\) is an inner product vector space.
 If \(x_1,x_2\in\!L^2(\mathbb{R},w(\xi)d\xi)\), then the inner product
 \(\langle{}x_1,x_2\rangle_{L^2(\mathbb{R},w(\xi)d\xi)}\) is defined as
\begin{equation}
\label{InnPr}
\langle{}x_1,x_2\rangle_{L^2(\mathbb{R},w(\xi)d\xi)}=
\int\limits
_{\mathbb{R}}x_1(\xi)\overline{x_2(\xi)}w(\xi)d\xi\,.
\end{equation}
The inner product \eqref{InPr} generates the norm \eqref{DefL2b}:
\[\|x\|^2_{L^2(\mathbb{R},w(\xi)d\xi)}=\langle{}x,x\rangle_{L^2(\mathbb{R},w(\xi)d\xi)},
 \ \ \forall\,x\in{}L^2(\mathbb{R},w(\xi)d\xi).\]

 \subsection{Fourier transform.}
For a complex-valued function \(x\) defined on \(\mathbb{R}\), its Fourier transform \(\hat{x}\) is
\begin{subequations}
\label{FoT}
\begin{equation}
\label{DiFoT}
\hat{x}(\omega)=\int\limits_{\mathbb{R}}x(t)e^{-2\pi{}i\omega{}t}dt\,,
\quad\omega\in\mathbb{R}.
\end{equation}
For a complex-valued function \(y\) defined on \(\mathbb{R}\), its inverse Fourier transform \(\check{y}\) is
\begin{equation}
\label{InFoT}
\check{y}(t)=\int\limits_{\mathbb{R}}y(\omega)e^{2\pi{}it\omega{}}d\omega\,,
\quad{t}\in\mathbb{R}.
\end{equation}
The transforms \eqref{DiFoT} and \eqref{InFoT} are mutually inverse:
\begin{equation}
\label{MuInv}
\textup{if } y=\hat{x}, \textup{ than } x=\check{y}.
\end{equation}
\end{subequations}
The Fourier transform and the inverse Fourier transform can be considered as operators \(\mathscr{F}\) and \(\mathscr{F}^{-1}\) acting in \(L^2(\mathbb{R})\):
\begin{subequations}
\label{FoOp}
\begin{align}
\label{DiFoOp}
\mathscr{F}x&=\hat{x}\,,\\
\mathscr{F}^{-1}y&=\check{y}\,.
\label{MuInV}
\end{align}
\end{subequations}
The Fourier operator \(\mathscr{F}\) is  well defined for \(x\in{}L^2(\mathbb{R},dt)\) and maps
the space \(L^2(\mathbb{R},dt)\) \emph{onto} the space \(L^2(\mathbb{R},d\omega)\) isometrically: the Parseval equality
\begin{equation}
\label{Par}
\|x\|^2_{L^2(\mathbb{R},dt)}=
\|\hat{x}\|^2_{L^2(\mathbb{R},d\omega)}, \ \ \forall\,x\in{}L^2(\mathbb{R},dt)
\end{equation}
holds.

The equalities \eqref{FoOp} can be expressed as
\begin{equation}
\label{MuIn}
\mathscr{F}^{-1}\mathscr{F}=\mathscr{I}_{T},\quad \mathscr{F}\,\mathscr{F}^{-1}=\mathscr{I}_{\Omega},
\end{equation}
where \(\mathscr{I}_{T}\) and \(\mathscr{I}_{\Omega}\) are the identity operators
in the spaces \(L^2(\mathbb{R},dt)\) and \(L^2(\mathbb{R},d\omega)\)
respectively.
\subsection{Definition of the space \(\boldsymbol{\mathcal{H}_{w_T,w_{\Omega}}}\).}
Let \(w_{T}(t)\) and \(w_{\Omega}(\omega)\) be two weight functions defined for \(t\in\mathbb{R}\)
and \(\omega\in\mathbb{R}\) respectively.
We associate the spaces \(L^2(\mathbb{R},w_T(t)dt)\) and \(L^2(\mathbb{R},w_{\Omega}(\omega)d\omega)\) with these weight functions.

 \vspace{2.0ex}
 \noindent
 \begin{definition}
 \label{DMS}
 \begin{subequations}
  Let weight functions \(w_{T}(t)\) and \(w_{\Omega}(\omega)\) be given.\\
   \textsf{The space} \(\boldsymbol{\mathcal{H}_{w_T,w_{\Omega}}}\) is defined as the set of all those
  \(x(t)\in{}L^2(\mathbb{R}, dt)\) for which
  \begin{gather}
  x(t)\in{}L^2(\mathbb{R},w_T(t)dt) \textup{ \ and \ } \hat{x}(\omega)\in{}L^2(\mathbb{R},w_{\Omega}(\omega)d\omega),
  \end{gather}
  where \(\hat{x}(\omega\) is the Fourier transform of the function \(x(t)\): \(\hat{x}=\mathscr{F}x\).

 \textup{Equivalently}\\
  \textsf{The space} \(\boldsymbol{\mathcal{H}_{w_T,w_{\Omega}}}\) is defined as the set of all those \(y(\omega)\in{}L^2(\mathbb{R}, d\omega)\) for which
  \begin{gather}
  y(\omega)\in{}L^2(\mathbb{R},w_{\Omega}(\omega)d\omega) \textup{ and } \check{y}(t)\in{}L^2(\mathbb{R},w_{T}(t)dt),
  \end{gather}
  \end{subequations}
  where \(\check{y}(t)\) is the inverse Fourier transform of the function \(y(\omega)\):
  \(\check{y}=\mathscr{F}^{-1}y\).
 \end{definition}

 The space \(\mathcal{H}_{w_T,w_{\Omega}}\) is an inner product space.
 If \(x_1,\,x_2\in\mathcal{H}_{w_T,w_{\Omega}}\), then the inner product
 \(\langle{}x_1,x_2\rangle_{\mathcal{H}_{w_T,w_{\Omega}}}\) is defined as
 \begin{equation}
 \label{InPr}
 \langle{}x_1,x_2\rangle_{\mathcal{H}_{w_T,w_{\Omega}}}
 =\langle{}x_1,x_2\rangle_{L^2(\mathbb{R},w_T(t)dt)}
 +\langle{}\widehat{x_1},\widehat{x_2}\rangle_{L^2(\mathbb{R},w_{\Omega}(\omega)dt)}\,.
 \end{equation}
In particular, the expression \(\|x\|^{2}_{\mathcal{H}_{w_T,w_{\Omega}}}\) for the square of norm of \(x\in\mathcal{H}_{w_T,w_{\Omega}}\) is
\begin{equation}
\label{IOW}
\|x\|^{2}_{\mathcal{H}_{w_T,w_{\Omega}}}=\|x\|^{2}_{L^2(\mathbb{R},w_T(t)dt)}+
\|\hat{x}\|^{2}_{L^2(\mathbb{R},w_{\Omega}(\omega)d\omega)}.
\end{equation}
 \begin{remark}
 If the weight functions \(\,w_{T}(t)\), \(w_{\Omega}(\omega)\) grow very fast as
\(|t|\to\infty,\,|\omega|\to\infty\), then the space \(\mathcal{H}_{w_{T},w_{\Omega}}\) may consist of  the identically zero function only.

For example, if \(w_{T}(t)=\exp(\alpha{}t^2),\,w_{\Omega}(\omega)=\exp(\beta{}\omega^2)\), where
\(\alpha>0,\,\beta>0\), and \(\alpha\beta>4\pi^2\), then the space
\(\mathcal{H}_{w_{T},w_{\Omega}}\) contains
only the identically zero function. This statement is a version of Hardy's Theorem\,\footnote{
Concerning this theorem, we  refer to \cite[Sec.\,3.2]{DyMcK}.}.
 \end{remark}
 \noindent
\textbf{EXAMPLE}.
Let
\vspace{-0.5ex}
\begin{equation}
\label{SWF}
w_{T}(t)\equiv1 \ \textup{ and } \ w_{\Omega}(\omega)=(2\pi\omega)^{2n}.
\end{equation}
For \(x(t)\in{}L^2(\mathbb{R},dt)\),
the condition \(\int\limits_{\mathbb{R}}|\hat{x}(\omega)|^2{\omega}^{2n}d\omega<\infty\)
means that the \(n\)-th derivative \(x^{(n)}(t)\) of the function \(x(t)\), considered as a \textsf{distribution} originally, is actually a \textsf{function} from \(L^2(\mathbb{R},dt)\). Moreover
\begin{equation*}
\int\limits_{\mathbb{R}}\big|\hat{x}(\omega)\big|^2w_{\Omega}(\omega)d\omega=
\int\limits_{\mathbb{R}}\big|x^{(n)}(t)\big|^2dt.
\end{equation*}
Thus for the weight functions of the form \eqref{SWF},
\begin{equation*}
\|x\|^2_{\mathcal{H}_{w_{T},w_{\Omega}}}=\int\limits_{\mathbb{R}}|x(t)|^2dt+
\int\limits_{\mathbb{R}}\big|x^{(n)}(t)\big|^2dt.
\end{equation*}
So for the weight functions of the form \eqref{SWF}, the space \(\mathcal{H}_{w_{T},w_{\Omega}}\)
is the Sobolev space \(W_{\;2}^{(n)}\).
 \subsection{The non-degeneracy condition.}
 \begin{definition} \
 \label{DeCo}
 The weight functions \(w_{T}(t)\)  \textsf{satisfy
 the non-de\-generacy condition} if there exist  number
 \(\varepsilon_T>0\)  such that the set
 \(E_{T}=\lbrace{}t:\,w_{T}(t)<\varepsilon_{T}\rbrace\)
 is of finite Lebesgue measure:
 \(\textup{mes}\,E_{T}<\infty\).

 The weight functions \(w_{\Omega}(\omega)\)  \textsf{satisfy
 the non-de\-generacy condition} if there exist  number
 \(\varepsilon_{\Omega}>0\)  such that the set
 \(E_{\Omega}=\lbrace{}t:\,w_{\Omega}(\omega)<\varepsilon_{\Omega}\rbrace\)
 is of finite Lebesgue measure:
 \(\textup{mes}\,E_{\Omega}<\infty\).
 \end{definition}
\begin{theorem} \
If both weight functions \(w_{T}\) and \(w_{\Omega}\) satisfy the non-degeneracy conditions,
then the the inequality
\begin{equation}
\label{MaNo}
\|x\|^2_{L^2(\mathbb{R},dt)}\leq{}B\|x\|^2_{\mathcal{H}_{w_{T},w_{\Omega}}}
\end{equation}
holds for every  \(x\in\mathcal{H}_{w_{T},w_{\Omega}}\), where \(B<\infty\) is
a constant which does not depend on \(x\).
\end{theorem}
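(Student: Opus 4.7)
The plan is to reduce the claimed inequality to a purely geometric time--frequency concentration estimate, and then to establish that estimate by combining a Hilbert--Schmidt compactness argument with Benedicks' support principle.

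First, I would use the non-degeneracy of the two weights to pass from weighted norms to ordinary \(L^2\)-norms on the complements of the exceptional sets. Since \(w_{T}(t)\geq\varepsilon_{T}\) off \(E_{T}\) and \(w_{\Omega}(\omega)\geq\varepsilon_{\Omega}\) off \(E_{\Omega}\), one has
\begin{equation*}
\int\limits_{\mathbb{R}\setminus E_{T}}|x(t)|^{2}\,dt\leq\varepsilon_{T}^{-1}\|x\|^{2}_{L^{2}(\mathbb{R},w_{T}dt)},\qquad
\int\limits_{\mathbb{R}\setminus E_{\Omega}}|\hat{x}(\omega)|^{2}\,d\omega\leq\varepsilon_{\Omega}^{-1}\|\hat{x}\|^{2}_{L^{2}(\mathbb{R},w_{\Omega}d\omega)},
\end{equation*}
so both tails are majorised by \(\max(\varepsilon_{T}^{-1},\varepsilon_{\Omega}^{-1})\|x\|^{2}_{\mathcal{H}_{w_{T},w_{\Omega}}}\). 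It therefore suffices to prove the unweighted estimate
\begin{equation*}
\|x\|^{2}_{L^{2}(\mathbb{R},dt)}\leq C\bigl(\|\chi_{\mathbb{R}\setminus E_{T}}x\|^{2}_{L^{2}(dt)}+\|\chi_{\mathbb{R}\setminus E_{\Omega}}\hat{x}\|^{2}_{L^{2}(d\omega)}\bigr)
\end{equation*}
for every \(x\in L^{2}(\mathbb{R},dt)\), with a finite constant \(C\) depending only on \(E_{T}\) and \(E_{\Omega}\).

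To recast this operator-theoretically, let \(P_{T}\) denote multiplication by \(\chi_{E_{T}}\) on \(L^{2}(\mathbb{R},dt)\), let \(P_{\Omega}\) denote multiplication by \(\chi_{E_{\Omega}}\) on \(L^{2}(\mathbb{R},d\omega)\), and set \(B_{\Omega}:=\mathscr{F}^{-1}P_{\Omega}\mathscr{F}\), the band-limiting projection on \(L^{2}(\mathbb{R},dt)\). Writing \(x=P_{T}x+(I-P_{T})x\) and further \(P_{T}x=P_{T}B_{\Omega}x+P_{T}(I-B_{\Omega})x\), and invoking Parseval in the form \(\|(I-B_{\Omega})x\|_{L^{2}(dt)}=\|(I-P_{\Omega})\hat{x}\|_{L^{2}(d\omega)}\), the triangle inequality yields
\begin{equation*}
\|x\|\leq\alpha\|x\|+\|(I-P_{T})x\|+\|(I-P_{\Omega})\hat{x}\|,\qquad\alpha:=\|P_{T}B_{\Omega}\|_{\mathrm{op}}.
\end{equation*}
Provided \(\alpha<1\), rearrangement gives \(\|x\|\leq(1-\alpha)^{-1}\bigl(\|(I-P_{T})x\|+\|(I-P_{\Omega})\hat{x}\|\bigr)\), whence the required bound holds with \(C=2(1-\alpha)^{-2}\).

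The decisive step, and the one I expect to be the main obstacle, is proving \(\alpha<1\). I would argue as follows. The operator \(P_{\Omega}\mathscr{F}P_{T}\) is an integral operator with kernel \(\chi_{E_{\Omega}}(\omega)\chi_{E_{T}}(t)e^{-2\pi i\omega t}\), whose square-integral on \(\mathbb{R}\times\mathbb{R}\) equals \(|E_{T}|\cdot|E_{\Omega}|<\infty\); it is therefore Hilbert--Schmidt and in particular compact. Its adjoint \(P_{T}\mathscr{F}^{-1}P_{\Omega}\) is also compact, and so is the composition \(P_{T}B_{\Omega}P_{T}=(P_{T}\mathscr{F}^{-1}P_{\Omega})(P_{\Omega}\mathscr{F}P_{T})\), which moreover is self-adjoint and non-negative; by the \(C^{*}\)-identity \(\alpha^{2}=\|P_{T}B_{\Omega}P_{T}\|_{\mathrm{op}}\). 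A compact, self-adjoint, non-negative operator attains its norm at an eigenvector, and any eigenvector at eigenvalue \(1\) would satisfy \(P_{T}x=x\) and \(B_{\Omega}x=x\), i.e.\ \(x\) would be supported in \(E_{T}\) while \(\hat{x}\) would be supported in \(E_{\Omega}\). Benedicks' support theorem forces such an \(x\) to vanish whenever \(|E_{T}|,|E_{\Omega}|<\infty\). Hence the largest eigenvalue is strictly less than \(1\), \(\alpha<1\), and the argument closes. The only genuinely non-elementary input is Benedicks' theorem; everything else is Hilbert--Schmidt compactness and the triangle inequality.
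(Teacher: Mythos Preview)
Your argument is correct, but it is genuinely different from the paper's. The paper does the same first reduction you do---passing from the weighted norms to \(\|\chi_{\mathbb{R}\setminus E_T}x\|_{L^2}^2+\|\chi_{\mathbb{R}\setminus E_\Omega}\hat{x}\|_{L^2}^2\) via the non-degeneracy conditions---but then disposes of the unweighted inequality in one line by quoting Nazarov's quantitative uncertainty inequality
\[
\|x\|_{L^2}^2\leq A\exp\bigl(A\cdot|E|\cdot|F|\bigr)\Bigl(\|\chi_{\mathbb{R}\setminus E}x\|_{L^2}^2+\|\chi_{\mathbb{R}\setminus F}\hat{x}\|_{L^2}^2\Bigr),
\]
with an absolute constant \(A\). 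Your route is the older Amrein--Berthier strategy: Hilbert--Schmidt compactness of \(P_\Omega\mathscr{F}P_T\) forces \(\|P_TB_\Omega P_T\|\) to be an eigenvalue, and Benedicks' support theorem rules out the eigenvalue \(1\). This is more self-contained---Benedicks' theorem is considerably easier than Nazarov's inequality---but the price is that your constant \(2(1-\alpha)^{-2}\) is non-explicit, since \(\alpha\) is obtained by a compactness contradiction rather than an estimate. The paper's citation of Nazarov yields an explicit \(B\) depending only on \(|E_T|,|E_\Omega|,\varepsilon_T,\varepsilon_\Omega\). For the purposes of this theorem either approach suffices, as only the existence of some finite \(B\) is claimed.
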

\begin{proof}
In \cite{Naz} F.\,Nazarov proved the remarkable inequality
\begin{equation}
\label{NazIn}
\int\limits_{\mathbb{R}}|x(t)|^2dt\leq{}A\exp\{A\cdot\textup{mes}\,E\cdot\textup{mes}\,F\,\}
\Big(\int\limits_{\mathbb{R}\setminus\,E}|x(t)|^2dt+
\int\limits_{\mathbb{R}\setminus\,F}|\hat{x}(\omega)|^2d\omega\Big)\,,
\end{equation}
where \(x\) is an arbitrary function from \(L^2(\mathbb{R})\),
\(\hat{x}\) is the Fourier transform of the function \(x\),
\(E\) and \(F\) are
arbitrary subsets of \(\mathbb{R}\), and \(A\), \(0<A<\infty\), is an absolute constant.
(The value \(A\) does not depend on \(x,\,E,\,F\).) We apply the Nazarov inequality
\eqref{NazIn} to the sets \(E=E_T\),\,\(F=E_\Omega\)
which appear in Definition \ref{DeCo}
and to  an arbitrary function  \(x\in\mathcal{H}_{w_{T},w_{\Omega}}\).
Since \(\varepsilon_{T}\leq{}w_{T}(t)\) for \(t\in\mathbb{R}\setminus{}E_T\)
and \(\varepsilon_{\Omega}\leq{}w_{\Omega}(\omega)\) for
\(\omega\in\mathbb{R}\setminus{}E_{\Omega}\), the inequalities
\begin{equation*}
\int\limits_{\mathbb{R}\setminus{}E_{T}}|x(t)|^2dt\leq
\frac{1}{\varepsilon_T}\int\limits_{\mathbb{R}}|x(t)|^2w_T(t)dt,\
\int\limits_{\mathbb{R}\setminus{}E_{\Omega}}
|\hat{x}(\omega)|^2d\omega\leq
\frac{1}{\varepsilon_F}\int\limits_{\mathbb{R}}|\hat{x}(\omega)|^2w_{\Omega}(\omega)d\omega\,.
\end{equation*}
hold.
Combine these inequalities with \eqref{NazIn}, we come to the inequality
\eqref{MaNo}.
\end{proof}
\begin{corollary}
\label{Cor}
For every \(x\in\mathcal{H}_{w_{T},w_{\Omega}}\), the inequalities
\begin{subequations}
\label{CorET}
\begin{align}
\|x\|^2_{L^2(\mathbb{R},(1+w_{T}(t))dt)}\leq&(B+1)\|x\|^2_{\mathcal{H}_{w_{T},w_{\Omega}}},\\
\label{CorEO}
\|\hat{x}\|^2_{L^2(\mathbb{R},(1+w_{\Omega}(\omega))d\omega)}\leq&(B+1)\|x\|^2_{\mathcal{H}_{w_{T},w_{\Omega}}}
\end{align}
\end{subequations}
hold, where \(B\) is the same constant that in \eqref{MaNo}.
\end{corollary}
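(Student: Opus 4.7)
The plan is to expand the left-hand sides by the linearity of the integral, splitting the weight \(1+w_{T}(t)\) (respectively \(1+w_{\Omega}(\omega)\)) into its two summands, and then bound each of the two resulting pieces separately using the preceding theorem and the definition of the norm on \(\mathcal{H}_{w_{T},w_{\Omega}}\).

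For the first inequality, I would write
\[
\|x\|^2_{L^2(\mathbb{R},(1+w_{T}(t))dt)}
=\|x\|^2_{L^2(\mathbb{R},dt)}+\|x\|^2_{L^2(\mathbb{R},w_{T}(t)dt)}.
\]
The first summand is bounded by \(B\|x\|^2_{\mathcal{H}_{w_{T},w_{\Omega}}}\) by the theorem \eqref{MaNo}, while the second summand is dominated by \(\|x\|^2_{\mathcal{H}_{w_{T},w_{\Omega}}}\) directly from the defining identity \eqref{IOW}, since that norm is a sum of two non-negative terms. Adding the two bounds yields \((B+1)\|x\|^2_{\mathcal{H}_{w_{T},w_{\Omega}}}\).

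For the second inequality, I would apply the same decomposition on the Fourier side:
\[
\|\hat{x}\|^2_{L^2(\mathbb{R},(1+w_{\Omega}(\omega))d\omega)}
=\|\hat{x}\|^2_{L^2(\mathbb{R},d\omega)}+\|\hat{x}\|^2_{L^2(\mathbb{R},w_{\Omega}(\omega)d\omega)}.
\]
The Parseval identity \eqref{Par} converts the first term into \(\|x\|^2_{L^2(\mathbb{R},dt)}\), which is again controlled by \(B\|x\|^2_{\mathcal{H}_{w_{T},w_{\Omega}}}\) via \eqref{MaNo}. The second term is again bounded by \(\|x\|^2_{\mathcal{H}_{w_{T},w_{\Omega}}}\) directly from \eqref{IOW}. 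Summing gives the required estimate.

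There is no real obstacle here; the corollary is a bookkeeping consequence of the theorem combined with the two trivial observations that each of the weighted norms appearing in \eqref{IOW} is dominated by the whole \(\mathcal{H}_{w_{T},w_{\Omega}}\)-norm, and that the Fourier transform is an \(L^2\)-isometry. The only thing to be mindful of is to use Parseval in the Fourier-side estimate so that the theorem, which controls \(\|x\|^2_{L^2(\mathbb{R},dt)}\), can be invoked for the unweighted piece on the frequency side as well.
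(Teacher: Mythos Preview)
Your argument is correct and is exactly the routine computation the paper has in mind: the corollary is stated without proof there, as an immediate consequence of \eqref{MaNo}, the definition \eqref{IOW}, and Parseval \eqref{Par}, which is precisely the combination you use.
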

\begin{theorem}
\label{Com}
If both weight functions \(w_{T}(t)\) and \(w_{\Omega}(\omega)\) satisfy the non-degeneracy conditions, then the space \(\mathcal{H}_{w_{T},w_{\Omega}}\) provided by the norm \eqref{IOW} is complete.
\end{theorem}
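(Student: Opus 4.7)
The plan is to take an arbitrary Cauchy sequence $\{x_n\}_{n\in\mathbb{N}}$ in $\mathcal{H}_{w_T,w_{\Omega}}$ and produce a limit $x\in\mathcal{H}_{w_T,w_{\Omega}}$ to which it converges in the norm \eqref{IOW}. By the very definition of the norm, the hypothesis that $\{x_n\}$ is Cauchy yields at once that $\{x_n\}$ is Cauchy in $L^2(\mathbb{R},w_T(t)dt)$ and that $\{\hat{x}_n\}$ is Cauchy in $L^2(\mathbb{R},w_{\Omega}(\omega)d\omega)$. Since the space $\mathcal{H}_{w_T,w_{\Omega}}$ is linear and the differences $x_n-x_m$ again lie in it, Corollary~\ref{Cor} applies to $x_n-x_m$ and shows that, in addition, $\{x_n\}$ is Cauchy in $L^2(\mathbb{R},(1+w_T(t))dt)$ and $\{\hat{x}_n\}$ is Cauchy in $L^2(\mathbb{R},(1+w_{\Omega}(\omega))d\omega)$.

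Because the weights $1+w_T$ and $1+w_{\Omega}$ are bounded below by $1$, the two latter weighted $L^2$ spaces consist of honest Lebesgue equivalence classes and are complete by the standard completeness of weighted $L^2$ spaces. Let $x$ and $y$ be the resulting limits. The trivial estimates $\|\,\cdot\,\|_{L^2(\mathbb{R},dt)}\le\|\,\cdot\,\|_{L^2(\mathbb{R},(1+w_T)dt)}$ and $\|\,\cdot\,\|_{L^2(\mathbb{R},d\omega)}\le\|\,\cdot\,\|_{L^2(\mathbb{R},(1+w_{\Omega})d\omega)}$ then imply $x_n\to x$ in $L^2(\mathbb{R},dt)$ and $\hat{x}_n\to y$ in $L^2(\mathbb{R},d\omega)$. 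The Parseval equality \eqref{Par} (i.e. the isometry of $\mathscr{F}$ on $L^2(\mathbb{R},dt)$) gives also $\hat{x}_n\to\hat{x}$ in $L^2(\mathbb{R},d\omega)$, and uniqueness of the limit forces $y=\hat{x}$.

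This identification then closes the argument: since $x$ lies in $L^2(\mathbb{R},w_T(t)dt)$ and $\hat{x}=y$ lies in $L^2(\mathbb{R},w_{\Omega}(\omega)d\omega)$, the function $x$ belongs to $\mathcal{H}_{w_T,w_{\Omega}}$; moreover $x_n\to x$ in $L^2(\mathbb{R},w_T(t)dt)$ and $\hat{x}_n\to\hat{x}$ in $L^2(\mathbb{R},w_{\Omega}(\omega)d\omega)$, so adding the two squared norms gives $\|x_n-x\|^2_{\mathcal{H}_{w_T,w_{\Omega}}}\to0$, as required.

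The one step that is not purely formal is the matching of the two \emph{a priori} unrelated limits $x$ and $y$. This is where the previous theorem is essential: without the inequality \eqref{MaNo} (and hence Corollary~\ref{Cor}) one would have no control of $x_n$ in the unweighted space $L^2(\mathbb{R},dt)$, and so no way to invoke the Fourier isometry and conclude that the weighted $L^2$-limit of $\hat{x}_n$ is in fact the Fourier transform of the weighted $L^2$-limit of $x_n$. Everything else in the proof is the standard completeness of weighted $L^2$ spaces together with continuity of $\mathscr{F}$ in the ordinary $L^2$ norm.
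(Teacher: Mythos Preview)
Your proof is correct and rests on the same key idea as the paper's---namely, using the inequality \eqref{MaNo} (via Corollary~\ref{Cor}) to pass from control in the weighted spaces to control in unweighted $L^2$, and then invoking the Fourier isometry to identify the two candidate limits. The execution, however, differs. The paper obtains the limit $x$ only in $L^2(\mathbb{R},dt)$, then extracts an almost-everywhere convergent subsequence and applies Fatou's Lemma twice: first to show $x\in\mathcal{H}_{w_T,w_{\Omega}}$, and again to establish $\|x_{n_k}-x\|_{\mathcal{H}_{w_T,w_{\Omega}}}\to0$. You instead work directly in the complete spaces $L^2(\mathbb{R},(1+w_T)dt)$ and $L^2(\mathbb{R},(1+w_{\Omega})d\omega)$, so that both membership and convergence come for free from standard $L^2$ completeness, with no subsequences or Fatou needed. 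Your route is a bit cleaner; the paper's route has the minor advantage of using only \eqref{MaNo} rather than its corollary, but the difference is cosmetic since Corollary~\ref{Cor} is immediate from \eqref{MaNo}.
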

\begin{proof} Let \(\lbrace{}x_n\rbrace_n\) be a Cauchy sequence of elements from
\(\mathcal{H}_{w_{T},w_{\Omega}}\), that is
\begin{equation*}
\|x_n-x_m\|_{\mathcal{H}_{w_{T},w_{\Omega}}}\to0 \ \textup{ as } m\to\infty,\,n\to\infty.
\end{equation*}
From the inequality \eqref{MaNo} and from the Parseval equality it follows that
\begin{equation*}
\|x_n-x_m\|_{L^2(\mathbb{R},dt)}\to0 \ \textup{ and } \|\hat{x}_n-\hat{x}_m\|_{L^2(\mathbb{R},d\omega)}\to0
\textup{ as } m\to\infty,\,n\to\infty.
\end{equation*}
Since the space \(L^2(\mathbb{R},dt)\) is complete, there exists \(x\in{}L^2(\mathbb{R},dt)\)
such that
\begin{equation*}
\|x_n-x\|_{L^2(\mathbb{R},dt)}\to0 \ \textup{ as } n\to\infty.
\end{equation*}
In view of the Parseval equality,
\begin{equation*}
\|\hat{x}_n-\hat{x}\|_{L^2(\mathbb{R},d\omega)}\to0 \ \textup{ as } n\to\infty.
\end{equation*}
According to well known results from the measure theory\footnote{See for example
\cite[Sections 21,22]{Hal}}, we can select an increasing subsequence \(\lbrace{}n_k\rbrace\)
of natural numbers such that
\begin{subequations}
\label{CAE}
\begin{align}
\label{CAET}
x_{n_k}(t)\to x(t) \ &\textup{ as \(k\to\infty \ dt\)-almost everywhere on } \mathbb{R},\\
\label{CAEO}
 \hat{x}_{n_k}(\omega)\to\hat{x}(\omega) \ &\textup{ as
 \(k\to\infty \ d\omega\)-almost everywhere on } \mathbb{R}.
\end{align}
\end{subequations}
Since a Cauchy sequence is bounded,
\begin{equation*}
\int\limits_{\mathbb{R}}|x_{n_{k}}(t)|^2w_{T}(t)dt+
\int\limits_{\mathbb{R}}|\hat{x}_{n_{k}}(\omega)|^2w_{\Omega}(\omega)d\omega\leq{}C<\infty,
\ \ k=1,2,\,\ldots\,,
\end{equation*}
where \(C\) does not depend on \(k\). Using \eqref{CAE} and Fatou's Lemma, we conclude
that
\begin{equation*}
\int\limits_{\mathbb{R}}|x(t)|^2w_{T}(t)dt+
\int\limits_{\mathbb{R}}|\hat{x}(\omega)|^2w_{\Omega}(\omega)d\omega\leq{}C.
\end{equation*}
Thus \(x\in\mathcal{H}_{w_{T},w_{\Omega}}\).

Given \(\varepsilon>0\), we choose \(K(\varepsilon)<\infty\) such that
\begin{equation*}
\Big(k>K(\varepsilon), l>K(\varepsilon)\Big)\Rightarrow
\big\|x_{n_k}-x_{n_l}\big\|^2_{\mathcal{H}_{w_{T},w_{\Omega}}}\leq\varepsilon^2,
\end{equation*}
i.e.
\begin{equation*}
\int\limits_{\mathbb{R}}\big|x_{n_k}(t)-x_{n_l}(t)\big|^2w_{T}(t)dt +
\int\limits_{\mathbb{R}}\big|\widehat{x_{n_k}}(\omega)-\widehat{x_{n_l}}(\omega)\big|^2
w_{\Omega}(\omega)d\omega\leq\varepsilon^2.
\end{equation*}
Using \eqref{CAE}, we pass to the limit as \(l\to\infty\) in the last inequality.
By Fatou's Lemma, we obtain
\begin{equation*}
\int\limits_{\mathbb{R}}\big|x_{n_k}(t)-x(t)\big|^2w_{T}(t)dt +
\int\limits_{\mathbb{R}}\big|\widehat{x_{n_k}}(\omega)-\widehat{x}(\omega)\big|^2
w_{\Omega}(\omega)d\omega\leq\varepsilon^2,
\end{equation*}
i.e.
\begin{equation*}
\Big(k\geq{}K(\varepsilon)\Big)\Rightarrow
\big\|x_{n_k}-x\big\|^2_{\mathcal{H}_{w_{T},w_{\Omega}}}\leq\varepsilon^2.
\end{equation*}
\end{proof}

\subsection{The regularity condition.}
In what follows we impose some regularity condition on the weight function \(w(\xi)\).
In particular, this regularity condition ensure  that the shift operator \(\mathscr{T}_h\)
is a bounded operator in the space \(L^2(\mathbb{R},w(\xi)d\xi)\) for each \(h\in\mathbb{R}\).

The regularity condition is formulated un terms of the value \(M_{w}(\delta)\) which can be
interpreted as the multiplicative modulus  of continuity (m.m.c.) of the weight function \(w(\xi)\).
\begin{definition}
\label{DMMC}
Let \(w(\xi)\) be a weight function satisfying the condition
\begin{equation}
\label{StriPos}
w(\xi)>0, \ \ \forall\,\xi\in\mathbb{R}.
\end{equation}
 \textsf{The  multiplicative modulus  of continuity} \(M_{w}(\delta)\) of the weight function \(w(\xi)\)
is defined as
\begin{gather}
\label{RegVa}
M_{w}(\delta)\stackrel{\textup{def}}{=}
\sup\limits_{\substack{\xi^{\prime}\in\mathbb{R},
\,\xi^{\prime\prime}\in\mathbb{R}\\|\xi^{\prime}-\xi^{\prime\prime}|\leq\delta}}
\frac{w(\xi^{\prime})}{w(\xi^{\prime\prime})}\,,\hspace*{3.0ex}0\leq\delta<\infty.
\end{gather}
The function \(M_{w}(\delta)\) may take the value \(\infty\):
\(M_{w}(\delta)\leq\infty\) for \(\delta\in[0,\infty)\).
\end{definition}
\begin{remark}
Without condition
\eqref{StriPos}, the value \eqref{RegVa} may be not well defined: considering the ratio
\(\frac{w(\xi^{\prime})}{w(\xi^{\prime\prime})}\), we may come to the uncertainty of the form~%
\(\frac{0}{0}\).
\end{remark}
\begin{remark}
\label{winv}
Under the condition \eqref{StriPos},
the function \(w^{-1}(\xi)=1/w(\xi)\) also is a weight function.
From \eqref{RegVa} it is clear that  multiplicative modules  of continuity of these weight functions
coincide:
\begin{equation}
\label{CoInc}
M_{w}(\delta)=M_{w^{-1}}(\delta), \ \ \forall\,\delta\in[0,\infty).
\end{equation}
\end{remark}
\begin{remark}
The function \(M_w(\delta)\) is not necessarily continuous with respect to
 \(\delta\).
\end{remark}
\begin{lemma}
\label{PMMC}
For any\footnote{To exclude division by zero, we must assume that the condition \eqref{StriPos} holds.} weight function \(w(\xi)\), its multiplicative modulus  of continuity \(M_{w}(\delta)\)
possesses the properties
\begin{enumerate}
\item[\textup{1}.] {\ }\\[-5.5ex]
\begin{equation}
\label{MMCz}
M_{w}(0)=1;
\end{equation}
\item[\textup{2}.] The function \(M_{w}(\delta)\) increases with respect to \(\delta\):
\begin{gather}
\label{Mon}
\text{If \(0\leq\delta^{\prime}<\delta^{\prime\prime}<\infty\), then
\(M_w(\delta^{\prime})\leq{M_w(\delta^{\prime\prime})}\).}\\[-2.0ex]
\intertext{In particular,}
\notag\\[-6.0ex]
 1\leq{}M_w(\delta)\,, \ \ \forall\,\delta\in\mathbb{R}_{+}.
\end{gather}
\item[\textup{3}.] The functions \(M_w\) is submultiplicative:
\begin{gather}
\label{SuMuPr}
M_w(\delta_1+\delta_2)\leq{}M_w(\delta_1)\cdot{}M_w(\delta_2), \ \ \forall\,\delta_1\in\mathbb{R}_{+},\,\forall\,\delta_2\in\mathbb{R}_{+};\\[-1.5ex]
\intertext{In particular,}
\notag\\[-6.0ex]
 \label{SubSp}
 M_w(n\delta)\leq{}(M_w(\delta))^n,\ \
\forall\,n\in\mathbb{N},\,\forall\,\delta\in\mathbb{R}_{+}.
\end{gather}
\end{enumerate}
\end{lemma}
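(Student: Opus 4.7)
The plan is to verify the three properties essentially from the definition \eqref{RegVa}, with the monotonicity and submultiplicativity being the structurally nontrivial parts; no deep analysis is required.

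For property 1, I would observe that when $\delta=0$ the condition $|\xi'-\xi''|\leq 0$ forces $\xi'=\xi''$, so every ratio $w(\xi')/w(\xi'')$ in the supremum equals $1$, giving $M_w(0)=1$ directly.

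For property 2, monotonicity is a matter of noting that the set of admissible pairs $(\xi',\xi'')$ for $\delta'$ is contained in the admissible set for $\delta''$ whenever $\delta'<\delta''$; passing to supremum over the larger set can only increase the value. The consequence $1\leq M_w(\delta)$ for $\delta>0$ then follows by combining monotonicity with property 1, or equivalently by noting that the choice $\xi'=\xi''$ is always admissible and contributes the value $1$ to the supremum.

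Property 3, the submultiplicativity, is the one that requires a small idea. Given $\xi',\xi''\in\mathbb{R}$ with $|\xi'-\xi''|\leq\delta_1+\delta_2$, I would insert an auxiliary point $\xi$ lying between $\xi'$ and $\xi''$ and chosen so that $|\xi'-\xi|\leq\delta_1$ and $|\xi-\xi''|\leq\delta_2$ (using, say, $\xi=\xi'+\operatorname{sgn}(\xi''-\xi')\min(\delta_1,|\xi''-\xi'|)$, which exists precisely because $|\xi'-\xi''|\leq\delta_1+\delta_2$ and because $w>0$ under \eqref{StriPos} so all the ratios below are well defined). Then the telescoping identity
\begin{equation*}
\frac{w(\xi')}{w(\xi'')}=\frac{w(\xi')}{w(\xi)}\cdot\frac{w(\xi)}{w(\xi'')}
\end{equation*}
gives, by the definition \eqref{RegVa} applied to each factor,
\begin{equation*}
\frac{w(\xi')}{w(\xi'')}\leq M_w(\delta_1)\cdot M_w(\delta_2).
\end{equation*}
Taking the supremum over all admissible $(\xi',\xi'')$ yields \eqref{SuMuPr}, and the iterated inequality \eqref{SubSp} follows by a straightforward induction on $n$ using $M_w(n\delta)=M_w((n-1)\delta+\delta)\leq M_w((n-1)\delta)\cdot M_w(\delta)$.

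The only conceptual obstacle is the insertion of the intermediate point in property 3; once that is in place, the whole lemma is essentially a bookkeeping exercise on the definition, and I do not expect any genuine difficulty.
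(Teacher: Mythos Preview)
Your proposal is correct and complete: each of the three properties is verified directly from the definition \eqref{RegVa}, with the only substantive step being the insertion of an intermediate point in the proof of submultiplicativity, which you carry out properly. The paper itself states Lemma~\ref{PMMC} without proof, treating it as elementary, so there is nothing to compare against; your argument is exactly the standard one and would serve as the omitted proof.
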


From \eqref{Mon} and \eqref{SubSp} it follows that only two (mutually exclusive) possibilities.
can happen:

\centerline{Either \(M_{w}(\delta)<\infty \ \forall\,\delta\in\mathbb{R}_{+}\), or
\(M_{w}(\delta)=\infty \ \forall\,\delta\in\mathbb{R}_{+}\).}

\begin{definition}
\label{DefReCo}
\textsf{The weight functions \(w(\xi)\)  satisfies the regularity condition} if
\begin{subequations}
\label{FRC}
\begin{gather}
\label{FRC1}
M_{w}(\delta)<\infty, \ \ \forall\,\delta\in\mathbb{R}_{+},\\[-2.5ex]
\intertext{or, what is the same,}
\notag\\[-6.0ex]
\label{FRC2}
\exists\,\delta\in\mathbb{R}_{+}: \ M_{w}(\delta)<\infty.
\end{gather}
\end{subequations}
\end{definition}

\begin{remark}
If the regularity condition for a weight functions \(w(\xi)\) is satisfied, than
its multiplicative continuity modulus \(M_{w}(\delta)\)
 is locally bounded and grow not faster then exponentially with respect to \(\delta\).
 Moreover the estimate
 \begin{equation}
 \label{MCE}
 M_{w}(\delta)\leq{}C_{w}\exp(\mu_{w}\delta), \ \ \forall\,\delta\in\mathbb{R}_{+}
 \end{equation}
 holds, where \(C_w\) and \(\mu_w\) are some constants, \(0<C_{w}<\infty,\ 0\leq\mu_{w}<\infty\).

  The estimate \eqref{MCE}
 can be easily derived from \eqref{SubSp}.

 Choosing \(\xi^{\prime}=\xi,\,\xi^{\prime\prime}=0\) in \eqref{RegVa}, we obtain the estimate
 \begin{equation}
 \label{MwE}
 w(\xi)\leq{}w(0)M_{w}(|\xi|), \ \ \forall\,\xi\in\mathbb{R}.
 \end{equation}
 Comparing \eqref{MCE} and \eqref{MwE}, we obtain the estimate of the weight function \(w(\xi)\) from above:
 \begin{equation}
 \label{McEa}
 w(\xi)\leq{}w(0)\,C_{w}\exp(\mu_{w}|\xi|), \ \ \forall\,\xi\in\mathbb{R}.
 \end{equation}
 Taking into account \textup{Remark \ref{winv}}, we obtain the estimate of the weight function
 \(w(\xi)\) from below:
 \begin{equation}
 \label{McEb}
w(0)\,C^{-1}_{w}\exp(-\mu_{w}|\xi|)\leq{}w(\xi), \ \ \forall\,\xi\in\mathbb{R}.
 \end{equation}
 The constants \(C_{w}\) and \(\mu_{w}\) in \eqref{McEa} and \eqref{McEb} are the same that in
 \eqref{MCE}.
\end{remark}
\begin{remark}
\label{IWF}
In what follows, we impose the regularity conditions on the weight functions
\(1+w_T(t)\) and \(1+w_\Omega(\omega)\), where \(w_T\) and \(w_{\Omega}\) are
the same weight functions which appear in the definition\footnote{%
See Definition \ref{DMS}.
} %
of the space
\(\mathcal{H}_{w_T,w_{\Omega}}\).
These regularity conditions imply the estimates
\begin{subequations}
\label{IWFE}
\begin{align}
\label{IWFEa}
w_T(t)&\leq{}C_Te^{\mu_T|t|}, \ \ t\in\mathbb{R},\\
\label{IWFEb}
w_\Omega(\omega)&\leq{}C_{\Omega}e^{\mu_\Omega|\omega|},
 \ \ \omega\in\mathbb{R},
\end{align}
where \(C_T,\infty\), \(C_\Omega,\infty\), \(\mu_T<\infty\), \(\mu_{\Omega}<\infty\)
are some constants.
\end{subequations}
\end{remark}
\subsection{The shift operator.}
\begin{definition}
 \label{DefSh}
 For a function \(x(\xi)\) defined for \(\xi\in\mathbb{R}\) and for a number \(h\in\mathbb{R}\),
 the \textsf{shifted function} \((\mathscr{T}_hx)(\xi)\) is defined as a function
 \begin{equation}
 \label{DefShf}
 (\mathscr{T}_{\eta}x)(\xi)=x(\xi-\eta), \ \ \forall\,\xi\in\mathbb{R}.
 \end{equation}
 of the variable \(\xi\).
 The number \(\eta\) is considered as a parameter, the so called \textsf{shift parameter}.
 \end{definition}

 \begin{lemma}
 \label{PrTrO}
 Let \(w(\xi)\) be a weight function which satisfy the regularity condition~%
 \footnote{%
 \textup{See Definition \ref{DefReCo}.}
 }.%
 Then
 \begin{enumerate}
 \item[\textup{1}.]
 For each value \(\eta\in\mathbb{R}\) of the shift parameter, the shift operator \(\mathscr{T}_{\eta}\) is a bounded operator in the space \(L^2(\mathbb{R},w(\xi)d\xi)\).
 The inequality
 \begin{equation}
 \label{ENS}
 \|(\mathscr{T}_{\eta}x)(\xi)\|_{L^2(\mathbb{R},w(\xi)d\xi)}\leq
 \sqrt{M_{w}(|\eta|)}\,\|x(\xi)\|_{L^2(\mathbb{R},w(\xi)d\xi)}
 \end{equation}
 holds for every \(x(\xi)\in{}L^2(\mathbb{R},w(\xi)d\xi)\) and every \(\eta\in\mathbb{R}\), where \(M_{w}(\eta)\) is the multiplicative continuity modulus of the weight function \(w\).
 \item[\textup{2}.] The operator function \(\mathscr{T}_{\eta}:
 L^2(\mathbb{R},w(\xi)d\xi)\to{}L^2(\mathbb{R},w(\xi)d\xi)\) is strongly continuous
 with respect to \(\eta\):
 \begin{equation}
 \label{StCo}
 \lim_{|\eta|\to0}\|(\mathscr{T}_{\eta}x)(\xi)-x(\xi)\|_{L^2(\mathbb{R},w(\xi)d\xi)}=0,
 \ \ \forall\,x\in{}L^2(\mathbb{R},w(\xi)d\xi)\,.
 \end{equation}
 \end{enumerate}
\end{lemma}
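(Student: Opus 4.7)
The plan is to treat the two assertions separately, with both following from the multiplicative modulus of continuity \(M_w\) together with the regularity condition.

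For item 1, I would start from the definition of the norm and apply the change of variables \(\xi\mapsto\xi+\eta\):
\begin{equation*}
\|\mathscr{T}_{\eta}x\|^{2}_{L^2(\mathbb{R},w(\xi)d\xi)}
=\int_{\mathbb{R}}|x(\xi-\eta)|^{2}w(\xi)\,d\xi
=\int_{\mathbb{R}}|x(\xi)|^{2}w(\xi+\eta)\,d\xi.
\end{equation*}
By the very definition \eqref{RegVa} of \(M_w\) one has the pointwise bound \(w(\xi+\eta)\leq M_{w}(|\eta|)\,w(\xi)\) for every \(\xi\in\mathbb{R}\) (this requires the strict positivity \eqref{StriPos}, which is built into the regularity hypothesis). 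Substituting gives \eqref{ENS} at once.

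For item 2, I would use a standard three-epsilon argument with \(C_{c}(\mathbb{R})\) as the dense subclass. The regularity condition, via estimate \eqref{McEa}, forces \(w\) to be locally bounded, so that on any compact \([-R,R]\) the weighted and unweighted \(L^{2}\)-norms are equivalent; combined with truncation (dominated convergence shows \(\|x-x\cdot\chi_{[-R,R]}\|_{L^2(\mathbb{R},w(\xi)d\xi)}\to 0\) as \(R\to\infty\)) this yields density of \(C_{c}(\mathbb{R})\) in \(L^{2}(\mathbb{R},w(\xi)d\xi)\). Next, for \(\phi\in C_{c}(\mathbb{R})\), uniform continuity gives \(\sup_{\xi}|\phi(\xi-\eta)-\phi(\xi)|\to 0\) as \(|\eta|\to 0\), while for \(|\eta|\leq 1\) the function \(\mathscr{T}_{\eta}\phi-\phi\) is supported in a fixed compact set on which \(w\) is bounded. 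Hence \(\|\mathscr{T}_{\eta}\phi-\phi\|_{L^2(\mathbb{R},w(\xi)d\xi)}\to 0\).

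To conclude, given \(x\in L^{2}(\mathbb{R},w(\xi)d\xi)\) and \(\varepsilon>0\), choose \(\phi\in C_{c}(\mathbb{R})\) with \(\|x-\phi\|_{L^2(\mathbb{R},w(\xi)d\xi)}<\varepsilon\). The monotonicity property \eqref{Mon} gives \(M_{w}(|\eta|)\leq M_{w}(1)\) for \(|\eta|\leq 1\), so by item 1 and the triangle inequality,
\begin{equation*}
\|\mathscr{T}_{\eta}x-x\|
\leq \|\mathscr{T}_{\eta}(x-\phi)\|+\|\mathscr{T}_{\eta}\phi-\phi\|+\|\phi-x\|
\leq \bigl(1+\sqrt{M_{w}(1)}\bigr)\varepsilon+\|\mathscr{T}_{\eta}\phi-\phi\|
\end{equation*}
(all norms in \(L^{2}(\mathbb{R},w(\xi)d\xi)\)). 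Letting \(|\eta|\to 0\) kills the last term, and \(\varepsilon\) is arbitrary.

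The only nonroutine point is the density of \(C_{c}(\mathbb{R})\) in the weighted \(L^{2}\)-space; it is the place where the regularity condition really enters, through the local boundedness of \(w\) supplied by \eqref{MwE}. Once density is in hand, both the bound \eqref{ENS} and the strong continuity \eqref{StCo} reduce to direct manipulations with \(M_{w}\).
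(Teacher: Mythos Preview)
Your proof is correct and follows essentially the same route as the paper: the same change of variables and pointwise bound \(w(\xi+\eta)\le M_w(|\eta|)\,w(\xi)\) for part~1, and the same three-term splitting via a compactly supported continuous approximant (using local boundedness of \(w\)) together with the uniform shift bound from part~1 for part~2. The only cosmetic difference is that the paper bounds \(\|\mathscr{T}_\eta r\|\) by \(\sqrt{M_w(0+)}\,\varepsilon\) in the \(\limsup\) while you use \(\sqrt{M_w(1)}\,\varepsilon\) for \(|\eta|\le1\); either constant disappears when \(\varepsilon\to0\).
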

\begin{proof} {\ }\\
 \textbf{1}. According to definition of the norm in \(L^2(\mathbb{R},w(\xi)d\xi)\),
\[\|(\mathscr{T}_{\eta}x)(\xi)\|^2_{L^2(\mathbb{R},w(\xi)d\xi)}=
\int\limits_{\mathbb{R}}|x(\xi-\eta)|^2w(\xi)d\xi.
\]
Changing variable \(\xi-\eta\to\xi\), we obtain
\[
\|(\mathscr{T}_{\eta}x)(\xi)\|^2_{L^2(\mathbb{R},w(\xi)d\xi)}=
\int\limits_{\mathbb{R}}|x(\xi)|^2w(\xi+\eta)d\xi.
\]
According to Definition \ref{DMMC}, the inequality
\[w(\xi+\eta)\leq{}w(\xi)M_{w}(|\eta|)\]
holds. Combining this inequality with the previous equality, we obtain \eqref{ENS}.\\
\textbf{2}. Given a function \(x(\xi)\in{}L^2(\mathbb{R},w(\xi)d\xi)\) and \(\varepsilon>0\),
we can split the function \(x\) into the sum
\[x(t)=\varphi(t)+r(t),\]
where \(\varphi\) is a continuous function with a compact support,
and
\[\|r\|_{L^2(\mathbb{R},w(\xi)d\xi)}<\varepsilon.\]
Hence
\begin{multline*}
\|(\mathscr{T}_{\eta}x)(\xi)-x(\xi)\big\|_{L^2(\mathbb{R},w(\xi)d\xi)}\\
\leq\|(\mathscr{T}_{\eta}\varphi)(\xi)-\varphi(\xi)\big\|_{L^2(\mathbb{R},w(\xi)d\xi)}+
\|r(\xi)\|_{L^2(\mathbb{R},w(\xi)d\xi)}+
\big\|(\mathscr{T}_{\eta}r)(\xi)\big\|_{L^2(\mathbb{R},w(\xi)d\xi)}.
\end{multline*}
Since the function \(\varphi\) is continuous and has compact support and the weight function \(w_{T}\)
is locally bounded,
\[\lim_{\eta\to0}\|(\mathscr{T}_{\eta}\varphi)(\xi)-\varphi(\xi)
\big\|_{L^2(\mathbb{R},w(\xi)d\xi)}=0.\]
According to \eqref{ENS},
\[\varlimsup_{\eta\to0}\big\|(\mathscr{T}_{\eta}r)(\xi)\big\|_{L^2(\mathbb{R},w(\xi)d\xi)}
\leq\sqrt{M_{T}(+0)}\,\varepsilon.\]
Therefore
\[\varlimsup_{\eta\to0}\|(\mathscr{T}_{\eta}x)(\xi)-
x(\xi)\big\|_{L^2(\mathbb{R},w(\xi)d\xi)}\leq
\Big(1+\sqrt{M_{T}(+0)}\Big)\varepsilon.\]
Since \(\varepsilon>0\) can be arbitrary small,
the limiting relation \eqref{StCo} holds.
\end{proof}
\subsection{On integration of
\(\boldsymbol{L^2\pmb{(}\pmb{\mathbb{R}},w\pmb{(}\xi\pmb{)}d\xi\pmb{)}}\)-valued functions.}

Let \(X(\xi,\eta)\) be a function of two variables \((\xi,\eta)\in\mathbb{R}^2\),
\(\mathscr{D}(X)\) is the domain of definition of \(X(\xi,\eta)\), \(X(\xi,\eta):\,\mathscr{D}(X)\to\mathbb{C}.\)
We assume that the function \(X(\xi,\eta)\) is defined for almost every \((\xi,\eta)\in\mathbb{R}^2\)            with respect to the two-dimensional Lebesgue measure \(d\xi{}d\eta\):
\begin{equation}
\label{aeR2}
\iint\limits_{\mathbb{R}^2\setminus{}\mathscr{D}(X)}d\xi{}d\eta=0.
\end{equation}
For \(\eta\in\mathbb{R}\), let \(\mathscr{D}_{\eta}(X)\) be the
 \textsf{\(\eta\)-section of the set \(\mathscr{D}(X)\)}:
 \begin{subequations}
 \label{seq}
 \begin{gather}
\label{etsec}
\mathscr{D}_{\eta}(X)=\lbrace{}\xi\in\mathbb{R}:\ (\xi,\eta)\in{}\mathscr{D}(X)\rbrace,
\ \  \eta \in \mathbb{R}:\\
\intertext{For \(\xi\in\mathbb{R}\), let \(\mathscr{D}_{\xi}(X)\) be the
 \(\xi\)-\textsf{section of the set }\(\mathscr{D}(X)\):}\notag\\
 \notag\\[-6.0ex]
 \label{xisec}
 \mathscr{D}_{\xi}(X)=\lbrace{}\eta\in\mathbb{R}:\ (\xi,\eta)\in{}\mathscr{D}(X)\rbrace,
 \ \  \xi \in \mathbb{R}.
\end{gather}
\end{subequations}
The sets \(\mathscr{D}_{\eta}(X)\) and \(\mathscr{D}_{\xi}(X)\) are domains of definition
of the functions \(X_{\eta}(\xi)\) and \(X_{\xi}(\eta)\) respectively.

\noindent
For \(\eta\in\mathbb{R}\), the \(\eta\)-section \(X_{\eta}(\xi)\) of the function \(X(\xi,\eta)\)
is defind as
\begin{gather}
X_{\eta}(\xi)=X(\xi,\eta), \ \ \xi\in{}\mathscr{D}_{\eta}, \ \ \eta\in\mathbb{R}.
\intertext{For \(\xi\in\mathbb{R}\), the \(\xi\)-section \(X_{\xi}(\eta)\) of the function \(X(\xi,\eta)\) is defind as}\notag\\
\notag\\[-6.0ex]
X_{\xi}(\eta)=X(\xi,\eta), \ \ \eta\in{}\mathscr{D}_{\xi}, \ \ \xi\in\mathbb{R}.
\end{gather}
For any \(\eta\in\mathbb{R}\), the set \(\mathscr{D}_{\eta}(X)\) is the domain of definition
of the function \(X_{\eta}(\xi)\).\\
From \eqref{aeR2} and Fubini's Theorem it follows that
\begin{equation}
\label{aeae}
\int\limits_{\mathbb{R}}
\bigg(\int\limits_{\mathbb{R}\setminus{}\mathscr{D}_{\eta}}d\xi\bigg)d\eta=0\,,
\qquad
\int\limits_{\mathbb{R}}
\bigg(\int\limits_{\mathbb{R}\setminus{}\mathscr{D}_{\xi}}d\eta\bigg)d\xi=0\,.
\end{equation}
First of the equalities \eqref{aeae} means that for \(d\eta\)-almost every \(\eta\in\mathbb{R}\),
the \(\eta\)-section \(X_{\eta}(\xi)\) of the function \(X(\xi,\eta)\) is defined
for \(d\xi\)-almost every \(\xi\in\mathbb{R}\). The second one means that for \(d\xi\)-almost every \(\xi\in\mathbb{R}\), the \(\xi\)-section \(X_{\xi}(\eta)\) of the function \(X(\xi,\eta)\) is defined for \(d\eta\)-almost every \(\eta\in\mathbb{R}\).

In  particular, the value \(\|X_{\eta}(\xi)\|_{L^2(\mathbb{R},w(\xi)d\xi)}\) is well defined
for \(d\eta\)-almost every \(\eta\in\mathbb{R}\):
\begin{equation}
\label{wdes}
0\leq\|X_{\eta}(\xi)\|_{L^2(\mathbb{R},w(\xi)d\xi)}\leq\infty, \ \ \textup{for}
\ d\xi-\textup{almost every} \ \xi\in\mathbb{R}.
\end{equation}

 The following Lemma is related to integration of functions
 which values are elements of the inner space product \(L^2(\mathbb{R},w(\xi)d\xi)\).
 \begin{lemma}
\label{NIIN}
Let \(w(\xi)\) be a weight function satisfying the condition \eqref{StriPos}.

Let \(X(\xi,\eta)\) be a function of two variables which is defined for
almost every \((\xi,\eta)\in\mathbb{R}^2\) with respect to two-dimensional Lebesgue
measure \(d\xi{}d\eta\), that is the condition \eqref{aeR2} holds for the domain of definition \(\mathscr{D}(X)\)
of the function \(X\).\\
We assume that\footnote{In particular, \(\|X_{\eta}(\xi)\|_{L^2(\mathbb{R},w(\xi)d\xi)}<\infty\)
 for \(d\eta\)-almost every \(\eta\in\mathbb{R}\).}
\begin{equation}
\label{FIVF}
\int\limits_{\mathbb{R}}\|X_{\eta}(\xi)\|_{L^2(\mathbb{R},w(\xi)d\xi)}d\eta<\infty
\end{equation}

Then the function \(Y(\xi)\) which is defined by the integral
\begin{equation}
\label{FDBI}
Y(\xi)=\int\limits_{\mathbb{R}}X_{\xi}(\eta)\,d\eta
\end{equation}
exists for \(d\xi\)-almost every \(\xi\in\mathbb{R}\). Moreover,
\(Y(\xi)\in{}L^2(\mathbb{R},d\xi)\) and the inequality holds
\begin{equation}
\label{NIlIN}
\|Y(\xi)\|_{L^2(\mathbb{R},w(\xi)d\xi)}\leq
\int\limits_{\mathbb{R}}\|X_{\eta}(\xi)\|_{L^2(\mathbb{R},w(\xi)d\xi)}d\eta\,.
\end{equation}
\end{lemma}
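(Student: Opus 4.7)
The statement is a weighted-$L^2$ version of Minkowski's integral inequality, with $L^2$-norms measured against the measure $w(\xi)d\xi$. I would prove it by the classical duality argument, using Tonelli's theorem on non-negative integrands to avoid any integrability questions.

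Step one: introduce the non-negative auxiliary function
\[
\Phi(\xi)\;=\;\int_{\mathbb{R}}|X(\xi,\eta)|\,d\eta\,,
\]
defined (possibly with value $+\infty$) for $d\xi$-a.e.\ $\xi\in\mathbb{R}$ by \eqref{aeae}; measurability of $\Phi$ follows from Tonelli applied to $|X(\xi,\eta)|$. Once I show $\Phi\in L^2(\mathbb{R},w(\xi)d\xi)$, the integral $Y(\xi)=\int X_\xi(\eta)\,d\eta$ in \eqref{FDBI} converges absolutely for $d\xi$-a.e.\ $\xi$ (since $\Phi<\infty$ a.e.), and $|Y(\xi)|\leq\Phi(\xi)$ will yield the norm bound \eqref{NIlIN}.

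Step two: estimate $\Phi$ by duality. For an arbitrary non-negative $g\in L^2(\mathbb{R},w(\xi)d\xi)$, Tonelli's theorem (applied to the non-negative integrand $|X(\xi,\eta)|\,g(\xi)\,w(\xi)$) gives
\[
\int_{\mathbb{R}}\Phi(\xi)g(\xi)w(\xi)d\xi
=\int_{\mathbb{R}}\!\bigg(\!\int_{\mathbb{R}}|X(\xi,\eta)|g(\xi)w(\xi)d\xi\!\bigg)d\eta\,,
\]
and the inner integral is bounded by the Cauchy--Schwarz inequality in $L^2(\mathbb{R},w(\xi)d\xi)$ by $\|X_\eta\|_{L^2(\mathbb{R},w(\xi)d\xi)}\,\|g\|_{L^2(\mathbb{R},w(\xi)d\xi)}$. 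Thus
\[
\int_{\mathbb{R}}\Phi(\xi)g(\xi)w(\xi)d\xi
\;\leq\;\|g\|_{L^2(\mathbb{R},w(\xi)d\xi)}\cdot\int_{\mathbb{R}}\|X_\eta(\xi)\|_{L^2(\mathbb{R},w(\xi)d\xi)}d\eta\,,
\]
which is finite by hypothesis \eqref{FIVF}.

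Step three: convert this into the desired $L^2$-estimate on $\Phi$. Since $\Phi$ may a priori be $+\infty$ on a set of positive measure, I would truncate: for each $N\in\mathbb{N}$ set $\Phi_N=\Phi\cdot\chi_{\{\Phi\leq N\}\cap[-N,N]}$ (so $\Phi_N$ is bounded, compactly supported, and thus in $L^2(\mathbb{R},w(\xi)d\xi)$ because $w$ is locally bounded), and substitute $g=\Phi_N/\|\Phi_N\|_{L^2(\mathbb{R},w(\xi)d\xi)}$ (or $g=0$ if the denominator vanishes). The previous estimate yields
\[
\|\Phi_N\|_{L^2(\mathbb{R},w(\xi)d\xi)}
\leq\int_{\mathbb{R}}\|X_\eta(\xi)\|_{L^2(\mathbb{R},w(\xi)d\xi)}d\eta\,.
\]
By monotone convergence, $\|\Phi_N\|\uparrow\|\Phi\|$ as $N\to\infty$, so $\Phi\in L^2(\mathbb{R},w(\xi)d\xi)$ with the same bound. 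Finiteness of $\Phi$ a.e.\ guarantees the existence of $Y(\xi)$ via \eqref{FDBI} for $d\xi$-a.e.\ $\xi$, and the pointwise inequality $|Y(\xi)|\leq\Phi(\xi)$ immediately gives \eqref{NIlIN}.

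There is no serious obstacle; the proof is a routine application of Tonelli, Cauchy--Schwarz and duality. The only technical point requiring care is the measurability and potential non-finiteness of $\Phi$, which is handled by the truncation argument that keeps everything inside the dual estimate until the limit is taken.
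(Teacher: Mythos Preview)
Your argument is essentially the same as the paper's: the paper also introduces the majorant \(m(\xi)=\int_{\mathbb{R}}|X_\xi(\eta)|\,d\eta\) (your \(\Phi\)), tests it against non-negative \(r\in L^2(\mathbb{R},w(\xi)d\xi)\) with \(\|r\|\leq 1\), interchanges the integrals by Fubini/Tonelli, applies Cauchy--Schwarz to the inner integral, and then takes the supremum over \(r\) to obtain \(\|m\|_{L^2(\mathbb{R},w\,d\xi)}\leq\int\|X_\eta\|\,d\eta\); the conclusion \(|Y|\leq m\) finishes the proof exactly as you do.

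One small point: in your truncation step you justify \(\Phi_N\in L^2(\mathbb{R},w(\xi)d\xi)\) by saying ``\(w\) is locally bounded'', but Lemma~\ref{NIIN} only assumes \eqref{StriPos}, not the regularity condition, so local boundedness of \(w\) is not available here. This is easily repaired: either enlarge the truncation to \(\Phi_N=\Phi\cdot\chi_{\{\Phi\leq N\}\cap\{w\leq N\}\cap[-N,N]}\), or simply invoke directly (as the paper does) the standard duality identity \(\|\Phi\|_{L^2(w\,d\xi)}=\sup\{\int \Phi\,g\,w\,d\xi:\ g\geq 0,\ \|g\|_{L^2(w\,d\xi)}\leq 1\}\), which holds for any non-negative measurable \(\Phi\) without any truncation.
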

\begin{proof} Let
 \(\displaystyle{}m(\xi)=\int\limits_{\mathbb{R}}|X_{\xi}(\eta)|\,d\eta, \ \
0\leq{}m(\xi\leq\infty\).
The integral \eqref{FDBI} exists for those \(\xi\) for which \(m(\xi)<\infty\).
Let
\begin{equation}
\label{Conr}
 \|r(\xi)\|_{L^2(\mathbb{R},w(\xi)\,d\xi)}\leq1, \ \ r(\xi)\geq0.
\end{equation}
By Fubini's Theorem,
\[\int\limits_{\mathbb{R}}m(\xi)r(\xi)w(\xi)\,d\xi=
\int\limits_{\mathbb{R}}
\bigg(\int\limits_{\mathbb{R}}|X_{\eta}(\xi)|r(\xi)w(\xi)\,d\xi\bigg)\,d\eta.\]
Substituting the inequality
\[\int\limits_{\mathbb{R}}|X_{\eta}(\xi,\eta)|r(\xi)w(\xi)\,d\xi\leq
\|X_{\eta}(\xi)\|_{L^2(\mathbb{R},w(\xi)\,d\xi)},
\ \ \textup{\(\forall\,r\) \ satisfying \eqref{Conr},}
 \]
into the previous inequality, we obtain the inequality
\[\int\limits_{\mathbb{R}}m(\xi)r(\xi)w(\xi)\,d\xi\leq
\int\limits_{\mathbb{R}}\,
\|(X_{\eta}(\xi)\|_{L^2(\mathbb{R},w(\xi)d\xi)}\,d\eta\]
which holds for each \(r\) satisfying \eqref{Conr}. Taking \(\sup\) over all such \(r\),
we come to the inequality
\[\|m(\xi)\|_{L^2(\mathbb{R},w(\xi)d\xi)}\leq\int\limits_{\mathbb{R}}
\|X_{\eta}(\xi)\|_{L^2(\mathbb{R},w(\xi)d\xi)}\,d\eta.\]
If \eqref{FIVF} holds, then also \(\|m(\xi)\|_{L^2(\mathbb{R},w(\xi)d\xi)}<\infty\).
In particular, under the conditions \eqref{StriPos}, \eqref{FIVF}, the inequality \(m(\xi)<\infty\) holds
for \(d\xi\)-almost every \(\xi\in\mathbb{R}\). So the integral \eqref{FDBI} exists for
\(d\xi\)-almost every \(\xi\in\mathbb{R}\).
Since \(|Y(\xi)|\leq{}m(\xi)\), the inequality \eqref{NIlIN} holds.
\end{proof}
\subsection{Approximative Identities.}
Our proof of the completeness of Gaussians in the space \(\mathcal{H}_{w_T,w_{\Omega}}\)
is based on some approximation procedure. In this approximation procedure two families
of operators play role: the family \(\lbrace{}\mathscr{I}_{\alpha}\rbrace_{\alpha\in\mathbb{R}_{+}}\)
of integral operators and the family \(\lbrace{}\mathscr{M}_{\alpha}\rbrace_{\alpha\in\mathbb{R}_{+}}\) of multiplication operators.

The kernels of integral operators \(\lbrace{}\mathscr{I}_{\alpha}\rbrace_{\alpha\in\mathbb{R}_{+}}\)
involves the functions \(\lbrace{}G_{\alpha}(\eta)\rbrace_{\alpha\in\mathbb{R}_{+}}\):
\begin{equation}
\label{MAKe}
G_{\alpha}(\eta)=\sqrt{\alpha}\exp(-\pi\alpha\eta^2),\ \ \eta\in\mathbb{R}.
\end{equation}
The functions \(G_{\alpha}(\eta)\) posses the properties:
\begin{subequations}
\label{PropG}
\begin{enumerate}
\item{\ }\\[-6.0ex]
\begin{equation}
\label{Pos}
G_{\alpha}(\eta)>0, \ \ \forall\,\eta\in\mathbb{R},\,\forall\,\alpha\in\mathbb{R}_{+}.
\end{equation}
\item {\ }\\[-6.0ex]
\begin{equation}
\label{Norm}
\int\limits_{\mathbb{R}}G_{\alpha}(\eta)\,d\eta=1, \\ \ \forall\,\alpha\in\mathbb{R}_{+}.
\end{equation}
\item For each \(\mu\in\mathbb{R}_{+}\) and \(\delta\in\mathbb{R}_{+}\),
\begin{equation}
\label{conc}
\lim_{\alpha\to\infty}\int\limits_{\mathbb{R}\setminus(-\delta,\delta)}
G_{\alpha}(\eta)\exp(\mu|\eta|)d\eta=0.
\end{equation}
\end{enumerate}
\end{subequations}

\begin{definition}
\label{DApU}
Let \(\alpha\in\mathbb{R}_{+}\).
\begin{enumerate}
\item[\textup{1}.]
The operator \(\mathscr{I}_{\alpha}:%
L^2(\mathbb{R},w(\xi)d\xi)\to{}L^2(\mathbb{R},w(\xi)d\xi)\) is defined as
\begin{equation}
\label{AppUnO}
(\mathscr{I}_{\alpha}x)(\xi)=
\int\limits_{\mathbb{R}}G_{\alpha}(\eta)(\mathscr{T}_{\eta}x)(\xi)d\xi,
\end{equation}
where \(\mathscr{T}_{\eta}\) is the shift operator defined by \eqref{DefShf}.
\item[\textup{2}.]
The operator \(\mathscr{M}_{\alpha}:%
L^2(\mathbb{R},w(\xi)d\xi)\to{}L^2(\mathbb{R},w(\xi)d\xi)\) is defined as
\begin{equation}
\label{AppMuO}
(\mathscr{M}_{\alpha}x)(\xi)=\exp\big(-\tfrac{\pi}{\alpha}\,{\textstyle\xi^2}\big)\,x(\xi).
\end{equation}
\end{enumerate}
\end{definition}
\begin{lemma}
\label{AppIdL}
Let \(w(\xi)\) be a weight function which satisfy the regularity condition\footnote{%
See Definition \ref{DefReCo}.
}.\\ %
Then
\begin{enumerate}
\item[\textup{1.}]
For each \(\alpha>0\), the operator \(\mathscr{I}_{\alpha}\) is \, a bounded operator \,
in the space \(L^2(\mathbb{R},w(\xi)d\xi)\),
and the estimate
\begin{equation}
\label{IFN}
\|\mathscr{I}_{\alpha}\|_{L^2(\mathbb{R},w(\xi)d\xi)
\to{L^2(\mathbb{R},w(\xi)d\xi)}}\leq
\int\limits_{\mathbb{R}}G_{\alpha}(\eta)\sqrt{M_{w}(|\eta|)}\,d\eta, \ \ \forall\,\alpha\in\mathbb{R}^{+},
\end{equation}
holds, where \(M_{w}\) is the multiplicative modulus of continuity\footnote{%
See Definition \ref{DMMC}.
}%
of the weight function \(w\). In particular,
\begin{equation}
\label{IFNb}
\|\mathscr{I}_{\alpha}\|_{L^2(\mathbb{R},w(\xi)d\xi)
\to{L^2(\mathbb{R},w(\xi)d\xi)}}\leq
\sqrt{C_{w}}\int\limits_{\mathbb{R}}G_{\alpha}(\eta)\exp(\mu_{w}|\eta|/2)\,d\eta, \ \ \forall\,\alpha\in\mathbb{R}^{+},
\end{equation}
where \(C_{w}\) and \(\mu_{w}\) are the same that in \eqref{MCE}.
\item[\textup{2}.] The family \(\lbrace\mathscr{I}_{\alpha}\rbrace_{\alpha\in\mathbb{R}_{+}}\)
is an approximative identity in the space \(L^2(\mathbb{R},w(\xi)d\xi)\):
\begin{equation}
\label{ApprUn}
\lim_{\alpha\to\infty}
\big\|(\mathscr{I}_{\alpha}x)(\xi)-x(\xi)\big\|_{L^2(\mathbb{R},w(\xi)d\xi)}=0,
\ \ \textup{for each} \ x\in{}L^2(\mathbb{R},w(\xi)d\xi).
\end{equation}
\end{enumerate}
\end{lemma}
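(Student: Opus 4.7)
The plan is to deduce both claims from the previously established Minkowski-type integral inequality (Lemma \ref{NIIN}) together with the properties of the shift operator (Lemma \ref{PrTrO}) and of the Gaussian kernels $G_\alpha$. The operator $\mathscr{I}_\alpha$ is literally an $L^2(\mathbb{R}, w\,d\xi)$-valued integral of the shifted functions $\mathscr{T}_\eta x$ weighted by $G_\alpha(\eta)$, so Lemma \ref{NIIN} is exactly the right vehicle.

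For part 1, I would apply Lemma \ref{NIIN} with the two-variable function $X(\xi,\eta)=G_\alpha(\eta)(\mathscr{T}_\eta x)(\xi)$. Combined with the pointwise-in-$\eta$ bound \eqref{ENS} from Lemma \ref{PrTrO}, this yields
\[
\|\mathscr{I}_\alpha x\|_{L^2(\mathbb{R}, w(\xi)d\xi)}
\leq \int_{\mathbb{R}} G_\alpha(\eta)\,\|\mathscr{T}_\eta x\|_{L^2(\mathbb{R},w(\xi)d\xi)}\,d\eta
\leq \|x\|_{L^2(\mathbb{R},w(\xi)d\xi)}\int_{\mathbb{R}} G_\alpha(\eta)\sqrt{M_w(|\eta|)}\,d\eta,
\]
which is \eqref{IFN}. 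Estimate \eqref{IFNb} then follows by inserting the exponential bound \eqref{MCE}: $\sqrt{M_w(|\eta|)}\leq\sqrt{C_w}\,\exp(\mu_w|\eta|/2)$. Note that the integral in \eqref{IFNb} converges since $G_\alpha$ decays like a Gaussian while the other factor grows only exponentially, so $\mathscr{I}_\alpha$ is genuinely bounded (for every finite $\alpha>0$).

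For part 2, the key observation is that by the normalization \eqref{Norm} we may write
\[
(\mathscr{I}_\alpha x)(\xi)-x(\xi)
=\int_{\mathbb{R}} G_\alpha(\eta)\bigl[(\mathscr{T}_\eta x)(\xi)-x(\xi)\bigr]\,d\eta.
\]
Applying Lemma \ref{NIIN} once more with $X(\xi,\eta)=G_\alpha(\eta)[(\mathscr{T}_\eta x)(\xi)-x(\xi)]$ gives
\[
\|\mathscr{I}_\alpha x-x\|_{L^2(\mathbb{R}, w(\xi)d\xi)}
\leq \int_{\mathbb{R}} G_\alpha(\eta)\,\varphi_x(\eta)\,d\eta,
\qquad
\varphi_x(\eta):=\|\mathscr{T}_\eta x-x\|_{L^2(\mathbb{R},w(\xi)d\xi)}.
\]
By the strong continuity \eqref{StCo}, $\varphi_x(\eta)\to 0$ as $|\eta|\to 0$; by \eqref{ENS} combined with \eqref{MCE}, $\varphi_x(\eta)\leq (1+\sqrt{C_w}\exp(\mu_w|\eta|/2))\|x\|$ globally.

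The main (and only real) obstacle is making the standard $\varepsilon$-$\delta$ splitting work despite the possibly exponentially growing weight. Given $\varepsilon>0$, choose $\delta>0$ so that $\varphi_x(\eta)<\varepsilon$ for $|\eta|\leq\delta$; then the integral over $|\eta|\leq\delta$ is at most $\varepsilon\int G_\alpha=\varepsilon$. The integral over $|\eta|>\delta$ is controlled by $\|x\|\int_{|\eta|>\delta}G_\alpha(\eta)(1+\sqrt{C_w}\exp(\mu_w|\eta|/2))\,d\eta$, and property \eqref{conc} (applied with $\mu=\mu_w/2$) ensures that this integral tends to $0$ as $\alpha\to\infty$. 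Combining the two estimates and letting $\varepsilon\to 0$ gives \eqref{ApprUn}.
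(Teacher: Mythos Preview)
Your proposal is correct and follows essentially the same route as the paper: both parts use Lemma~\ref{NIIN} applied to $X(\xi,\eta)=G_\alpha(\eta)(\mathscr{T}_\eta x)(\xi)$ (respectively to its difference with $x$), together with the shift estimate \eqref{ENS}, the exponential bound \eqref{MCE}, and, for part~2, the normalization \eqref{Norm}, the strong continuity \eqref{StCo}, the $\varepsilon$--$\delta$ splitting, and property \eqref{conc} for the tail. The argument and its ingredients coincide with those in the paper.
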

\begin{proof} {\ }\\
\textsf{1.} We apply Lemma \ref{NIIN} to the function
\(X(\xi,\eta)=G_{\alpha}(\eta)(\mathscr{T}_{\eta}x)(\xi)\). According to Lemma
\ref{PrTrO}, the inequality \eqref{ENS} holds. This inequality implies the estimate for
\(L^2(\mathbb{R},w(\xi)d\xi)\)-norm of \(\eta\)-sections \(X_{\eta}(\xi)\) of the function \(X\):
\[\|X_{\eta}(\xi)\|_{L^2(\mathbb{R},w(\xi)d\xi)}\leq{}G_{\alpha}(\eta)\sqrt{M_{w}(\eta)}
\|x(\xi)\|_{L^2(\mathbb{R},w(\xi)d\xi)}.\]
The inequality \eqref{IFN} is a consequence of the last inequality. Using the estimate
\eqref{MCE} for \(M_{w}\), we come to the inequality \eqref{IFNb}.\\
\textsf{2.}  In view of \eqref{Norm},
\[(\mathscr{I}_{\alpha}x)(\xi)-x(\xi)=
\int\limits_{\mathbb{R}}G_{\alpha}(\eta)\big((\mathscr{T}_{\eta}x)(\xi)-x(\xi)\big)d\xi.\]
Let us fix  \(x(\xi)\in{}L^2(\mathbb{R},w(\xi)d\xi)\).
According to Lemma \ref{NIIN},
\begin{equation}
\label{PreEst}
\|(\mathscr{I}_{\alpha}x)(\xi)-x(\xi)\|_{L^2(\mathbb{R},w(\xi)d\xi)}\leq
\int\limits_{\mathbb{R}}G_{\alpha}(\eta)
\|(\mathscr{T}_{\eta}x)(\xi)-x(\xi)\|_{L^2(\mathbb{R},w(\xi)d\xi)}d\eta.
\end{equation}
By statement 2 of Lemma \ref{PrTrO}, for any \(\varepsilon>0\) there exists \(\delta>0\),
\(\delta=\delta(\varepsilon,x)\), such that
\begin{equation}
\label{ShEs}
\|(\mathscr{T}_{\eta}x)(\xi)-x(\xi)\|_{L^2(\mathbb{R},w(\xi)d\xi)}<\varepsilon
\ \ \textup{if} \ \ |\eta|<\delta.
\end{equation}
Splitting the integral in the right hand side of \eqref{PreEst} into the sum of integrals
taken over \((-\delta,\delta)\) and \(\mathbb{R}\setminus(-\delta,\delta)\), we obtain
\begin{multline}
\label{abc}
\|(\mathscr{I}_{\alpha}x)(\xi)-x(\xi)\|_{L^2(\mathbb{R},w(\xi)d\xi)}\leq
\!\int\limits_{(-\delta,\delta)}G_{\alpha}(\eta)
\|(\mathscr{T}_{\eta}x)(\xi)-x(\xi)\|_{L^2(\mathbb{R},w(\xi)d\xi)}d\eta\\
\hfill+
\int\limits_{\mathbb{R}\setminus(-\delta,\delta)}G_{\alpha}(\eta)
\Big(\|(\mathscr{T}_{\eta}x)(\xi)\|_{L^2(\mathbb{R},w(\xi)d\xi)}
+\|x(\xi)\|_{L^2(\mathbb{R},w(\xi)d\xi)}\Big)d\eta.
\end{multline}
From \eqref{abc} and from the inequalities \eqref{ShEs}, \eqref{ENS}, \eqref{MCE}
we obtain that
\begin{multline}
\label{ABC}
\|(\mathscr{I}_{\alpha}x)(\xi)-x(\xi)\|_{L^2(\mathbb{R},w(\xi)d\xi)}\leq\\
\varepsilon\int\limits_{(-\delta,\delta)}G_{\alpha}(\eta)d\eta+\big(\sqrt{C_{w}}+1\big)\!\!\!\!
\int\limits_{\mathbb{R}\setminus(-\delta,\delta)}G_{\alpha}(\eta)
\exp\big(\tfrac{\mu_{w}}{2}|\eta|\big)d\eta
\,\,\,\,
\|x\|_{L^2(\mathbb{R},w(\xi)d\xi)}.
\end{multline}
From \eqref{ABC} and from the properties \eqref{PropG} of the functional family
\(\lbrace{}G_{\alpha}\rbrace_{\alpha\in\mathbb{R}_{+}}\) it follows that
\[\varlimsup_{\alpha\to\infty}
\|(\mathscr{I}_{\alpha}x)(\xi)-x(\xi)\|_{L^2(\mathbb{R},w(\xi)d\xi)}\leq\varepsilon.\]
Since \(\varepsilon>0\) is arbitrary, \eqref{ApprUn} holds.
\end{proof}
\begin{lemma}
\label{ApIdWN}
Let \(w(\xi)\) be a weight function which satisfy the regularity condition.

Then each of the two families
\(\lbrace\mathscr{M}_{\alpha}\mathscr{I}_{\alpha}\rbrace_{\alpha\in\mathbb{R}_{+}}\) and
\(\lbrace\mathscr{I}_{\alpha}\mathscr{M}_{\alpha}\rbrace_{\alpha\in\mathbb{R}_{+}}\)
of operators
is an approximative identity in the space \(L^2(\mathbb{R},w(\xi)d\xi)\):
\begin{subequations}
\label{ApIdwn}
\begin{gather}
\label{ApIdwn1}
\lim_{\alpha\to\infty}
\big\|(\mathscr{I}_{\alpha}\mathscr{M}_{\alpha}x)
(\xi)-x(\xi)\big\|_{L^2(\mathbb{R},w(\xi)d\xi)}=0,
\ \ \textup{for each} \ x\in{}L^2(\mathbb{R},w(\xi)d\xi),\\
\label{ApIdwn2}
\lim_{\alpha\to\infty}
\big\|(\mathscr{M}_{\alpha}\mathscr{I}_{\alpha}x)(\xi)-x(\xi)\big\|_{L^2(\mathbb{R},w(\xi)d\xi)}=0,
\ \ \textup{for each} \ x\in{}L^2(\mathbb{R},w(\xi)d\xi).
\end{gather}
\end{subequations}
\end{lemma}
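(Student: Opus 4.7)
The plan is to reduce both statements to Lemma \ref{AppIdL} via the triangle inequality, using two auxiliary facts: that $\mathscr{M}_\alpha x \to x$ in $L^2(\mathbb{R},w(\xi)d\xi)$ for every fixed $x$, and that the operator norms $\|\mathscr{I}_\alpha\|$ and $\|\mathscr{M}_\alpha\|$ stay uniformly bounded as $\alpha \to \infty$. Concretely I would write
\[
\mathscr{I}_\alpha\mathscr{M}_\alpha x - x = \mathscr{I}_\alpha(\mathscr{M}_\alpha x - x) + (\mathscr{I}_\alpha x - x),
\qquad
\mathscr{M}_\alpha\mathscr{I}_\alpha x - x = \mathscr{M}_\alpha(\mathscr{I}_\alpha x - x) + (\mathscr{M}_\alpha x - x),
\]
so that in each case the second summand vanishes in norm by \eqref{ApprUn}, and the first summand is controlled by the product of the corresponding operator norm and the norm of the remaining difference.

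The pointwise convergence $\mathscr{M}_\alpha x \to x$ is a routine dominated convergence argument: since $0 < \exp(-\pi\xi^2/\alpha) \leq 1$, we have $|\mathscr{M}_\alpha x(\xi) - x(\xi)|^2 w(\xi) \leq |x(\xi)|^2 w(\xi) \in L^1(\mathbb{R})$ and the integrand converges pointwise to zero, so
\[
\|\mathscr{M}_\alpha x - x\|^2_{L^2(\mathbb{R},w(\xi)d\xi)}
= \int_\mathbb{R}\bigl(1-e^{-\pi\xi^2/\alpha}\bigr)^2 |x(\xi)|^2 w(\xi)\,d\xi \longrightarrow 0.
\]
The same pointwise bound yields $\|\mathscr{M}_\alpha\|_{L^2(\mathbb{R},w(\xi)d\xi)\to L^2(\mathbb{R},w(\xi)d\xi)} \leq 1$ for all $\alpha$.

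The uniform bound on $\|\mathscr{I}_\alpha\|$ is the only step that needs a small calculation. Starting from \eqref{IFNb} and changing variables $u=\sqrt{\alpha}\,\eta$ in the Gaussian integral gives
\[
\int_\mathbb{R} G_\alpha(\eta)\exp(\mu_w|\eta|/2)\,d\eta
= \int_\mathbb{R} e^{-\pi u^2}\exp\!\bigl(\mu_w|u|/(2\sqrt{\alpha})\bigr)\,du,
\]
which converges to $\int_\mathbb{R} e^{-\pi u^2}\,du = 1$ as $\alpha\to\infty$ by dominated convergence; in particular the integral is bounded by some constant $K$ uniformly in $\alpha \geq 1$, so $\|\mathscr{I}_\alpha\| \leq \sqrt{C_w}\,K$.

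With these three ingredients the conclusion is immediate: for \eqref{ApIdwn1} we estimate
$\|\mathscr{I}_\alpha\mathscr{M}_\alpha x - x\| \leq \sqrt{C_w}\,K\cdot\|\mathscr{M}_\alpha x - x\| + \|\mathscr{I}_\alpha x - x\| \to 0$,
and for \eqref{ApIdwn2} we estimate
$\|\mathscr{M}_\alpha\mathscr{I}_\alpha x - x\| \leq \|\mathscr{I}_\alpha x - x\| + \|\mathscr{M}_\alpha x - x\| \to 0$.
The main (and essentially only) technical point is the uniform boundedness of $\|\mathscr{I}_\alpha\|$ for large $\alpha$; once that is in hand, the rest is bookkeeping via the triangle inequality.
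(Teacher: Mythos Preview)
Your proof is correct and follows essentially the same route as the paper: the same two decompositions $\mathscr{I}_\alpha\mathscr{M}_\alpha-\mathscr{I}=\mathscr{I}_\alpha(\mathscr{M}_\alpha-\mathscr{I})+(\mathscr{I}_\alpha-\mathscr{I})$ and $\mathscr{M}_\alpha\mathscr{I}_\alpha-\mathscr{I}=\mathscr{M}_\alpha(\mathscr{I}_\alpha-\mathscr{I})+(\mathscr{M}_\alpha-\mathscr{I})$, combined with the uniform bound on $\|\mathscr{I}_\alpha\|$ from \eqref{IFNb}, the contractivity of $\mathscr{M}_\alpha$, and the strong convergence of each family separately. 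You supply a little more detail (the dominated-convergence argument for $\mathscr{M}_\alpha x\to x$ and the change of variables showing the integral in \eqref{IFNb} stays bounded), but the structure is identical to the paper's proof.
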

\begin{proof}
From \eqref{IFNb} it follows that the family of operators
\(\lbrace\mathscr{I}_{\alpha}\rbrace_{\alpha\in[1,\infty)}\) is uniformly bounded:
\begin{equation}
\label{UniBou}
\sup_{\alpha\in[1,\infty)}
\|\mathscr{I}_{\alpha}\|_{L^2(\mathbb{R},w(\xi)d\xi)\to{}L^2(\mathbb{R},w(\xi)d\xi)}<\infty.
\end{equation}
The operator \(\mathscr{M}_{\alpha}\) is contractive for any \(\alpha\in\mathbb{R}_{+}\):
\[\|\mathscr{M}_{\alpha}\|_{L^2(\mathbb{R},w(\xi)d\xi\to{}L^2(\mathbb{R},w(\xi)d\xi}=1,
\ \ \forall\,\alpha>0.
\]
In particular, the family \(\lbrace\mathscr{M}_{\alpha}\mathscr{I}_{\alpha}\rbrace_{\alpha\in\mathbb{R}_{+}}\)
is uniformly bounded.

In Lemma \ref{AppIdL} we established that
the family \(\lbrace\mathscr{I}_{\alpha}\rbrace_{\alpha\in\mathbb{R}_{+}}\)
is an approximative identity in the space \(L^2(\mathbb{R},w(\xi)d\xi)\):
\[\lim_{\alpha\to\infty}\mathscr{I}_{\alpha}=\mathscr{I},\]
where \(\mathscr{I}\) is the identity operator in the space \(L^2(\mathbb{R},w(\xi)d\xi)\)
and  convergence is the strong convergence in this space. It is clear that
the family \(\lbrace\mathscr{M}_{\alpha}\rbrace_{\alpha\in\mathbb{R}_{+}}\) also is
an approximative identity in the space \(L^2(\mathbb{R},w(\xi)d\xi)\):
\[\lim_{\alpha\to\infty}\mathscr{M}_{\alpha}=\mathscr{I},\]
where convergence is the strong convergence in \(L^2(\mathbb{R},w(\xi)d\xi)\).
The assertion of Lemma \ref{ApIdWN} follows now from the equalities
\begin{align*}
\mathscr{I}_{\alpha}\mathscr{M}_{\alpha}-\mathscr{I}&=
\mathscr{I}_{\alpha}(\mathscr{M}_{\alpha}-\mathscr{I})+(\mathscr{I}_{\alpha}-\mathscr{I}),\\
\mathscr{M}_{\alpha}\mathscr{I}_{\alpha}-\mathscr{I}&=
\mathscr{M}_{\alpha}(\mathscr{I}_{\alpha}-\mathscr{I})+(\mathscr{M}_{\alpha}-\mathscr{I}).
\end{align*}
\end{proof}

\section{The completeness of a system of Gaussians\\ in the space \(\boldsymbol{\mathcal{H}_{w_T,w_{\Omega}}}\).}
\begin{definition}
\label{DeGa}
For each \(\alpha\in\mathbb{R}_{+}\) and \(\tau\in\mathbb{R}\), \textsf{the Gaussian} \(\boldsymbol{g_{\alpha,\tau}}\)
 is a function
\begin{equation}
\label{Ga}
g_{\alpha,\tau}(t)=\sqrt{\alpha}\exp\big(-\pi\alpha(t-\tau)^2)\big)
\end{equation}
 of the variable \(t\in\mathbb{R}\). The Gaussians form a two-parametric family
 \(\lbrace{}g_{\alpha,\tau}\rbrace_{\alpha,\tau}\) of functions on \(\mathbb{R}\) which is parametrized
 by the parameters \(\alpha\in\mathbb{R}_{+}\) and \(\tau\in\mathbb{R}\).
\end{definition}

\noindent
The Fourier transform \(\widehat{g_{\alpha,\tau}}(\omega)\) of the Gaussian \(g_{\alpha,\tau}(t)\) is
\begin{equation}
\label{FTGa}
\widehat{g_{\alpha,\tau}}(\omega)=
\exp\big(-\tfrac{\pi}{\alpha}\omega^2-2\pi{}i\tau\omega\big).
\end{equation}
\begin{lemma}[I.Schur]
\label{ScTe}
For  a function \(K(t,\tau),\,t\in\mathbb{R},\,\tau\in\mathbb{R}\) of two variables, let as define
the values
\begin{equation}
\label{SchVal}
N_1(K)=\sup_{\tau\in\mathbb{R}}\int\limits_{\mathbb{R}}|K(t,\tau)|dt,\quad
N_\infty(K)=\sup_{t\in\mathbb{R}}\int\limits_{\mathbb{R}}|K(t,\tau)|d\tau.
\end{equation}
If the condition 
\begin{gather}
\label{SchCon}
N_{1}(K)<\infty,\quad{}N_{\infty}(K)<\infty
\end{gather}
are fulfilled, then for each \(f(t)\in{}L^2(\mathbb{R},dt)\), \(g(\tau)\in{}L^2(\mathbb{R},d\tau)\)
the double integral
\begin{equation}
\label{DoIn}
\iint\limits_{\mathbb{R}\times\mathbb{R}}f(t)K(t,\tau)\overline{g(\tau)}\,dtd\tau
\end{equation}
exists and admits the estimate
\begin{equation}
\label{DoInEs}
\bigg|\iint\limits_{\mathbb{R}\times\mathbb{R}}f(t)K(t,\tau)\overline{g(\tau)}\,dtd\tau\bigg|
\leq\sqrt{N_1(K)N_\infty(K)}\,\,\big\|f\big\|_{L^2(\mathbb{R},dt)}\,\big\|g\big\|_{L^2(\mathbb{R},d\tau)}.
\end{equation}
\end{lemma}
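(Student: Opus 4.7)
The plan is to use the classical Schur trick: factor $|K(t,\tau)|$ as $|K(t,\tau)|^{1/2}\cdot|K(t,\tau)|^{1/2}$ and apply the Cauchy--Schwarz inequality on $L^2(\mathbb{R}\times\mathbb{R}, dtd\tau)$, absorbing one half-power into $|f(t)|$ and the other into $|g(\tau)|$.

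First I would bound the absolute value of the integrand and write
\[
\bigl|f(t)K(t,\tau)\overline{g(\tau)}\bigr|
=\bigl(|f(t)|\,|K(t,\tau)|^{1/2}\bigr)\cdot\bigl(|K(t,\tau)|^{1/2}\,|g(\tau)|\bigr).
\]
Cauchy--Schwarz on the product measure space $\mathbb{R}\times\mathbb{R}$ then yields
\[
\iint_{\mathbb{R}\times\mathbb{R}}|f(t)K(t,\tau)g(\tau)|\,dtd\tau
\leq \Bigl(\iint |f(t)|^{2}|K(t,\tau)|\,dtd\tau\Bigr)^{1/2}
\Bigl(\iint |K(t,\tau)|\,|g(\tau)|^{2}\,dtd\tau\Bigr)^{1/2}.
\]

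Next I would evaluate each factor by Fubini. In the first one I integrate in $\tau$ first and use $\int_{\mathbb R}|K(t,\tau)|\,d\tau\leq N_{\infty}(K)$, obtaining $N_{\infty}(K)\|f\|_{L^{2}(\mathbb{R},dt)}^{2}$. In the second I integrate in $t$ first and use $\int_{\mathbb R}|K(t,\tau)|\,dt\leq N_{1}(K)$, obtaining $N_{1}(K)\|g\|_{L^{2}(\mathbb{R},d\tau)}^{2}$. Multiplying gives precisely the bound \eqref{DoInEs}. The same estimate applied to $|f|,|g|$ in place of $f,g$ shows that the iterated integral of the absolute value is finite, so by Fubini's theorem the double integral \eqref{DoIn} exists as an absolutely convergent Lebesgue integral.

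There is no real obstacle here; the only delicate point is ordering the steps so that the existence of \eqref{DoIn} is established \emph{before} invoking Fubini to rearrange the order of integration. I would handle this by first working throughout with $|f|,|K|,|g|$ (all non-negative, so Tonelli applies unconditionally), deriving both the finiteness and the quantitative bound, and only then concluding for the signed/complex integrand via Fubini.
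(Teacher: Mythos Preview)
Your proof is correct and follows essentially the same route as the paper: factor $|K|=|K|^{1/2}|K|^{1/2}$, apply Cauchy--Schwarz on $\mathbb{R}\times\mathbb{R}$, then use Fubini/Tonelli to bound each factor by $N_\infty(K)\|f\|_{L^2}^2$ and $N_1(K)\|g\|_{L^2}^2$. If anything, your version is slightly more careful in invoking Tonelli for the nonnegative integrand first to justify absolute convergence before appealing to Fubini.
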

\begin{proof}
The integral \eqref{DoIn} exists if \
\(\displaystyle\iint\limits_{\mathbb{R}\times\mathbb{R}}|f(t)|\,|K(t,\tau)|\,|g(\tau)|\,dtd\tau<\infty.\) \ %
Applying Cauchy-Schwarz inequality for double integral and Fubini Theorem, we obtain:
\begin{gather*}
\iint\limits_{\mathbb{R}\times\mathbb{R}}|f(t)|K(t,\tau)|g(\tau)|\,dtd\tau=
\iint\limits_{\mathbb{R}\times\mathbb{R}}\Big(|f(t)|K(t,\tau)^{1/2}\Big)
\Big(K(t,\tau)^{1/2}|g(\tau)|\Big)\,dtd\tau\\
\leq\Big(\iint\limits_{\mathbb{R}\times\mathbb{R}}\Big(|f(t)|^2K(t,\tau)dt\,d\tau\Big)^{\frac{1}{2}}
\cdot\Big(\iint\limits_{\mathbb{R}\times\mathbb{R}}\Big(|g(\tau)|^2K(t,\tau)dt\,d\tau\Big)^{\frac{1}{2}}\\
=\Bigg(\int\limits_{\mathbb{R}}
\bigg(\int\limits_{\mathbb{R}}K(t,\tau)\,d\tau\bigg)|f(t)|^2dt\Bigg)^{\frac{1}{2}}
\cdot\Bigg(\int\limits_{\mathbb{R}}
\bigg(\int\limits_{\mathbb{R}}K(t,\tau)\,dt\bigg)|g(\tau)|^2d\tau\Bigg)^{\frac{1}{2}}\\
\hspace*{0.6\linewidth}\leq\sqrt{N_\infty(K)N_1(K)}\,\big\|f\big\|^2_{L^2(\mathbb{R},dt)},
\end{gather*}
\end{proof}
\begin{theorem}
\label{GaIn}
We assume that:
\begin{enumerate}
\item[\textup{1.}]
 The weight functions \(w_{T}\) and \(w_{\Omega}\) satisfy
the non-degeneracy conditions,
\textup{(Definition \ref{DeCo})};
\item[\textup{2.}]
The weight functions \(1+w_T\) and \(1+w_{\Omega}\) satisfy the regularity conditions, \textup{(Definition \ref{DefReCo})}.\\
\end{enumerate}
Then
\begin{enumerate}
\item[\textup{1}.]
Each Gaussian \(g_{\alpha,\tau}\) belongs to the space \(\mathcal{H}_{w_T,w_{\Omega}}\).
\item[\textup{2}.]
The functional family \(\lbrace{}g_{\alpha,\tau}\rbrace_{\alpha,\tau}\), where \(\alpha\) runs
over \(\mathbb{R}_{+}\) and \(\tau\) runs over \(\mathbb{R}\), is complete\footnote{%
That is the linear hall of this family is dense in \(\mathcal{H}_{w_T,w_{\Omega}}\).
} %
in the space
\(\mathcal{H}_{w_{T},w_{\Omega}}\).
\end{enumerate}
\end{theorem}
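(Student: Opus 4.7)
My plan is to treat the two parts of the theorem separately. Part~1 (membership of each Gaussian in $\mathcal H_{w_T,w_\Omega}$) is immediate: the estimates~\eqref{IWFEa}--\eqref{IWFEb} give $w_T(t)\le C_T e^{\mu_T|t|}$ and $w_\Omega(\omega)\le C_\Omega e^{\mu_\Omega|\omega|}$, while $g_{\alpha,\tau}(t)$ and $\widehat{g_{\alpha,\tau}}(\omega)$ (see \eqref{FTGa}) decay super-exponentially in $|t|$ and $|\omega|$, so the two integrals in $\|g_{\alpha,\tau}\|_{\mathcal H}^2$ converge trivially.

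For Part~2 the plan is a two-stage approximation. Given $x\in\mathcal H_{w_T,w_\Omega}$, set $y_\alpha:=\mathscr I_\alpha\mathscr M_\alpha x$. I first claim $y_\alpha\to x$ in $\mathcal H_{w_T,w_\Omega}$ as $\alpha\to\infty$. Since $\mathscr I_\alpha\mathscr M_\alpha x=G_\alpha*(M_\alpha x)$ with $M_\alpha(\xi)=e^{-\pi\xi^2/\alpha}$, and since $\widehat{G_\alpha}=M_\alpha$ and $\widehat{M_\alpha}=G_\alpha$, a direct Fourier calculation yields the key intertwining identity
\[
\widehat{\mathscr I_\alpha\mathscr M_\alpha x}=\mathscr M_\alpha\mathscr I_\alpha\hat x.
\]
By Corollary~\ref{Cor}, $x\in L^2(\mathbb R,(1+w_T)\,dt)$ and $\hat x\in L^2(\mathbb R,(1+w_\Omega)\,d\omega)$, and the weights $1+w_T,\;1+w_\Omega$ satisfy the regularity condition by hypothesis. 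Applying Lemma~\ref{ApIdWN} to each half separately, I obtain $\mathscr I_\alpha\mathscr M_\alpha x\to x$ in $L^2(\mathbb R,w_T\,dt)$ and $\mathscr M_\alpha\mathscr I_\alpha\hat x\to\hat x$ in $L^2(\mathbb R,w_\Omega\,d\omega)$, whence $y_\alpha\to x$ in $\mathcal H_{w_T,w_\Omega}$.

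For the second stage, I rewrite $y_\alpha$ as a continuous superposition of Gaussians: changing variable $\tau=\xi-\eta$ in the definition of $\mathscr I_\alpha$ yields
\[
y_\alpha(\xi)=\int_{\mathbb R} c_\alpha(\tau)\,g_{\alpha,\tau}(\xi)\,d\tau,\qquad c_\alpha(\tau)=x(\tau)\,e^{-\pi\tau^2/\alpha}.
\]
I plan to interpret this as a Bochner integral in $\mathcal H_{w_T,w_\Omega}$ whose Riemann sums $\sum_k c_\alpha(\tau_k)\,g_{\alpha,\tau_k}\,\Delta\tau_k$ are exactly finite linear combinations of Gaussians from the family. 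To license this interpretation I must verify: (a)~norm-continuity of $\tau\mapsto g_{\alpha,\tau}$ into $\mathcal H_{w_T,w_\Omega}$, which on the time side follows from statement~2 of Lemma~\ref{PrTrO} applied to the weight $1+w_T$ via $g_{\alpha,\tau}=\mathscr T_\tau g_{\alpha,0}$, and on the Fourier side from dominated convergence using $\widehat{g_{\alpha,\tau}}(\omega)=e^{-2\pi i\tau\omega}\widehat{g_{\alpha,0}}(\omega)$ together with $|\widehat{g_{\alpha,0}}|^2 w_\Omega\in L^1$; (b)~at-most-exponential growth of $\|g_{\alpha,\tau}\|_{\mathcal H}$ in $|\tau|$, clear from $w_T(t)\le C_T e^{\mu_T|t|}$ after translating the Gaussian mass to the origin; and (c)~integrability $\int_{\mathbb R}|c_\alpha(\tau)|\,\|g_{\alpha,\tau}\|_{\mathcal H}\,d\tau<\infty$, which follows from (b), the super-exponential decay of $e^{-\pi\tau^2/\alpha}$, and Cauchy--Schwarz, using the inclusion $x\in L^2(\mathbb R,dt)$ from~\eqref{MaNo}.

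The main obstacle is the second stage: I must identify the $\mathcal H$-valued Bochner integral with the pointwise function $y_\alpha$, and ensure that Riemann sums converge in both the time-side and Fourier-side components of the $\mathcal H$-norm simultaneously. Once (a)--(c) are in place, convergence of Riemann sums follows from standard Bochner theory (alternatively, it can be carried out by hand, controlling the tail $|\tau|>R$ and the mesh error separately, in a manner reminiscent of the kernel bounds in Schur's Lemma~\ref{ScTe}). Combining the two stages -- choosing $\alpha$ large first, then refining the partition -- produces a finite linear combination of Gaussians within $\varepsilon$ of $x$ in $\mathcal H_{w_T,w_\Omega}$, establishing the density claim.
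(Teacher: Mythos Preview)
Your proposal is correct and rests on the same engine as the paper --- the approximative-identity Lemma~\ref{ApIdWN} together with the Fourier intertwining $\widehat{\mathscr I_\alpha\mathscr M_\alpha x}=\mathscr M_\alpha\mathscr I_\alpha\hat x$ --- but you package it differently. The paper argues by orthogonality: assuming $\langle f,g_{\alpha,\tau}\rangle_{\mathcal H}=0$ for all $\alpha,\tau$, it multiplies this scalar identity by $e^{-\pi\tau^2/\alpha}\overline{f(\tau)}$, integrates in $\tau$, and after Fubini (justified via the Schur test, Lemma~\ref{ScTe}) and a Parseval step recognises the result as $\langle f,\mathscr I_\alpha\mathscr M_\alpha f\rangle_{L^2(w_T)}+\langle\hat f,\mathscr M_\alpha\mathscr I_\alpha\hat f\rangle_{L^2(w_\Omega)}=0$; letting $\alpha\to\infty$ gives $\|f\|_{\mathcal H}=0$. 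You instead prove density directly: $y_\alpha=\mathscr I_\alpha\mathscr M_\alpha x\to x$ in $\mathcal H$, and $y_\alpha=\int c_\alpha(\tau)g_{\alpha,\tau}\,d\tau$ lies in the closed span of the Gaussians. Your route is more constructive and avoids the Schur-test bookkeeping, trading it for a Bochner-integral identification; the paper's route stays entirely with scalar integrals and needs no vector-valued integration. Note that for your Stage~2 you do not actually need Riemann sums: once (a)--(c) hold, the Bochner integral exists and automatically lies in the closed linear span of $\{g_{\alpha,\tau}\}$ (simple-function approximants are finite combinations of Gaussians), and its identification with the pointwise function $y_\alpha$ is exactly the content of Lemma~\ref{NIIN} applied in $L^2(\mathbb R,(1+w_T)dt)$, together with the bounded inclusion $\mathcal H\hookrightarrow L^2(\mathbb R,dt)$ from~\eqref{MaNo}.
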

\begin{proof} {\ }\\
\textbf{1}.
From  \eqref{Ga}, \eqref{FTGa} and \eqref{IWFE} it is evident that
\begin{equation*}
\int\limits_{\mathbb{R}}\big|g_{\alpha,\tau}(t)\big|^2w_{T}(t)dt+
\int\limits_{\mathbb{R}}\big|\widehat{g_{\alpha,\tau}}(\omega)\big|^2w_{\Omega}(\omega)d\omega<\infty
\ \ \forall\,\alpha\in\mathbb{R}_{+},\,\ \forall\,\tau\in\mathbb{R}.
\end{equation*}
\textbf{2}. \textsf{Step 1}. According to Theorem \ref{Com}, the inner product space \(\mathcal{H}_{w_{T},w_{\Omega}}\)
is compete. Given \(f\in\mathcal{H}_{w_{T},w_{\Omega}}\),  we have to prove that from the orthogonality condition
\begin{equation}
\label{OrCo}
\langle{}f,g_{\alpha,\tau}\rangle_{\mathcal{H}_{w_{T},w_{\Omega}}}=0, \ \ \forall\,\alpha\in\mathbb{R}_{+}, \ \forall\,\tau\in\mathbb{R}
\end{equation}
it follows that \(\langle{}f,f\rangle_{\mathcal{H}_{w_{T},w_{\Omega}}}=0\). According to \eqref{InPr}, \eqref{Ga}, \eqref{FTGa}, the condition \eqref{OrCo} can be presented in the form
\begin{multline}
\label{OrCo1}
\hfil
\int\limits_{\mathbb{R}}f(t)\sqrt{\alpha}e^{-\pi\alpha(t-\tau)^2}w_{T}(t)dt+
\int\limits_{\mathbb{R}}\hat{f}(\omega)e^{-\frac{\pi}{\alpha}\omega^2}
e^{2\pi{}i\tau\omega}w_{\Omega}(\omega)d\omega=0, \\
\forall\,\alpha\in\mathbb{R}_{+}, \ \forall\,\tau\in\mathbb{R}.
\end{multline}
Is appropriate to recall\footnote{%
See Corollary \ref{Cor}.
} %
 that since \(f\in\mathcal{H}_{w_T,w_{\Omega}}\),
\begin{subequations}
\label{ATR}
\begin{gather}
\label{ATRa}
f(t)\in{}L^2(\mathbb{R},(1+w_T(t))dt),\\
\label{ATRb}
\hat{f}(\omega)\in{}L^2(\mathbb{R},(1+w_{\Omega}(\omega))d\omega).
\end{gather}
\end{subequations}
 We multiply the equality \eqref{OrCo1} with \(e^{-\frac{\pi}{\alpha}\tau^2}\overline{f(\tau)}\)
and integrate by the measure \(d\tau\) over \(\mathbb{R}\).
\begin{gather}
\int\limits_{\mathbb{R}}\bigg(\int\limits_{\mathbb{R}}f(t)\sqrt{\alpha}%
e^{-\pi\alpha(t-\tau)^2}w_{T}(t)dt\bigg)
e^{-\frac{\pi}{\alpha}\tau^2}\overline{f(\tau)}d\tau\notag\\[-1.0ex]
\label{MulIn}\\[-1.0ex]
+\int\limits_{\mathbb{R}}\bigg(\int\limits_{\mathbb{R}}
\hat{f}(\omega)e^{-\frac{\pi}{\alpha}\omega^2}
e^{2\pi{}i\tau\omega}w_{\Omega}(\omega)d\omega\bigg)
e^{-\frac{\pi}{\alpha}\tau^2}\overline{f(\tau)}d\tau=0.
\notag
\end{gather}
\textbf{Step 2}. Let us proof the existence of the iterated integral
 \begin{equation}
 \label{ItInt1}
 \int\limits_{\mathbb{R}}\bigg(\int\limits_{\mathbb{R}}f(t)\sqrt{\alpha}%
e^{-\pi\alpha(t-\tau)^2}w_{T}(t)dt\bigg)
e^{-\frac{\pi}{\alpha}\tau^2}\overline{f(\tau)}d\tau
\end{equation}
which appears as the first summand of \eqref{MulIn}.
Using the inequality
\[w_{T}(t)\leq{}C_{T}e^{\mu_{T}|t-\tau|}e^{\mu_{T}|\tau|}, \ \forall\,t\in\mathbb{R},\,\tau\in\mathbb{R},\]
which follows from \eqref{IWFEa}, we obtain the inequality
\begin{equation}
\label{Ker}
\iint\limits_{\mathbb{R}\times\mathbb{R}}|f(t)|
\sqrt{\alpha}e^{-\pi\alpha(t-\tau)^2}w_{T}(t)e^{-\frac{\pi}{\alpha}\tau^2}|f(\tau)|\,dtd\tau
\leq\iint\limits_{\mathbb{R}\times\mathbb{R}}|f(t)|K_{T}(t,\tau)|f(\tau)|\,dtd\tau,
\end{equation}
where the kernel \(K_{T}\) is of the form
\begin{equation}
\label{K1}
K_{T}(t,\tau)=C_{T}\sqrt{\alpha}\exp\big(-\pi\alpha(t-\tau)^2+\mu_{T}|t-\tau|\big)
\exp\big(-\tfrac{\pi}{\alpha}\tau^2+\mu_{T}|\tau|\big).
\end{equation}
(The constants \(C_{T}\) and \(\mu_{T}\) are the same that in \eqref{IWFEa}.)
By direct calculation,
\begin{equation*}
\max_{\tau\in\mathbb{R}}\exp\big(-\tfrac{\pi}{\alpha}\tau^2+\mu_{T}|\tau|\big)=
\exp\big(\tfrac{\mu_{T}^2\alpha}{4\pi}\big).
\end{equation*}
So, the kernel \(K_{T}(t,\tau)\) admits estimate
\[0\leq
K_{T}(t,\tau)\leq{}C_T\sqrt{\alpha}\exp\big(\tfrac{\mu_{T}^2\alpha}{4\pi}\big)
\exp\big(-\pi\alpha(t-\tau)^2+\mu_{T}|t-\tau|\big), \ \ \forall\,t\in\mathbb{R},\,\tau\in\mathbb{R}.
\]
From the last inequality, the estimates
\[\sup_{\tau\in\mathbb{R}}\int\limits_{\mathbb{R}}K_{T}(t,\tau)dt\leq{}I(\alpha), \quad \sup_{t\in\mathbb{R}}\int\limits_{\mathbb{R}}K_{T}(t,\tau)dt\leq{}I(\alpha)\]
hold, where
\[I(\alpha)=C_T\sqrt{\alpha}\exp\big(\tfrac{\mu_{T}^2\alpha}{4\pi}\big)
\int\limits_{\mathbb{R}}\exp\big(-\pi\alpha\xi^2+\mu_{T}|\xi|\big)\,d\xi,
\ \ I(\alpha)<\infty \ \forall\,\alpha\in\mathbb{R}_{+}.\]

Applying Lemma \ref{ScTe} to the kernel \(K_{T}(t,\tau)\), \eqref{K1}, we come
to the inequality
\[\iint\limits_{\mathbb{R}\times\mathbb{R}}|f(t)|K_{T}(t,\tau)|f(\tau)|\,dtd\tau
\leq I(\alpha)\big\|f\big\|^2_{L^2(\mathbb(R),dt)}<\infty.\]

Therefore the double integral in the left hand side of \eqref{Ker} is finite for every \(\alpha\in\mathbb{R}_{+}\)
and for every \(f\in{}L^2(\mathbb{R},dt)\), in particular, for every \(f\in\mathcal{H}_{w_{T},w_{\Omega}}\).

By Fubini's Theorem, the iterated integral \eqref{ItInt1} exists. Moreover we can
 interchange the order of
integration in this integral:
\begin{multline*}
\int\limits_{\mathbb{R}}\bigg(\int\limits_{\mathbb{R}}
f(t)\,\sqrt{\alpha}e^{-\pi\alpha(t-\tau)^2}w_{T}(t)dt\bigg)
e^{-\frac{\pi}{\alpha}\tau^2}\overline{f(\tau)}d\tau\\=
\int\limits_{\mathbb{R}}f(t)\bigg(\int\limits_{\mathbb{R}}
\sqrt{\alpha}e^{-\pi\alpha(t-\tau)^2}
e^{-\frac{\pi}{\alpha}\tau^2}\overline{f(\tau)}d\tau\bigg)w_{T}(t)dt.
\end{multline*}
Changing the variable \(\tau\to{}t-\tau\) in the inner integral of the right hand side
of the previous equality, we come to the equality
\begin{multline*}
\int\limits_{\mathbb{R}}\bigg(\int\limits_{\mathbb{R}}
f(t)\,\sqrt{\alpha}e^{-\pi\alpha(t-\tau)^2}w_{T}(t)dt\bigg)
e^{-\frac{\pi}{\alpha}\tau^2}\overline{f(\tau)}d\tau\\=
\int\limits_{\mathbb{R}}f(t)\bigg(\int\limits_{\mathbb{R}}
\sqrt{\alpha}e^{-\pi\alpha\tau^2}
e^{-\frac{\pi}{\alpha}(t-\tau)^2}\overline{f(t-\tau)}d\tau\bigg)w_{T}(t)dt.
\end{multline*}
In other words,
\begin{equation}
\label{ChOrd}
\int\limits_{\mathbb{R}}\bigg(\int\limits_{\mathbb{R}}
f(t)\sqrt{\alpha}e^{-\pi\alpha(t-\tau)^2}w_{T}(t)dt\bigg)
e^{-\frac{\pi}{\alpha}\tau^2}\overline{f(\tau)}d\tau=
\langle{}f,\mathfrak{I}_{\alpha}\mathscr{M}_{\alpha}f\rangle_{L^2(\mathbb{R},w_{T}(t)dt)},
\end{equation}
where \(\mathscr{I}_{\alpha}\) and \(\mathscr{M}_{\alpha}\) are the operators which were
introduced in Definition \ref{DApU}.

Since \(f\in\mathcal{H}_{w_T,w_{\omega}}\), the inclusion \eqref{ATRa} holds.
According to Lemma \ref{ApIdWN}, the family
\(\lbrace\mathscr{I}_{\alpha}\mathscr{M}_{\alpha}\rbrace_{\alpha\to\infty}\)
is an approximative identity
in \(L^2(\mathbb{R},(1+w_T(t))dt)\):
\[
\lim_{\alpha\to\infty}\big\|(\mathscr{I}_{\alpha}\mathscr{M}_{\alpha}x)(t)
-x(t)
\big\|_{L^2(\mathbb{R},(1+w_{T}(t))dt)}=0, \ \ \forall\,x\in{}L^2(\mathbb{R},(1+w_{T}(t))dt).
\]

All the more,
\[
\lim_{\alpha\to\infty}\big\|(\mathscr{I}_{\alpha}\mathscr{M}_{\alpha}f)(t)
-f(t)
\big\|_{L^2(\mathbb{R},w_{T}(t)dt)}=0, \ \ \textup{for} \ f \ \textup{which appears in} \
\eqref{ChOrd}.
\]
Comparing the last equality with \eqref{ChOrd}, we conclude that
\begin{equation}
\label{LS1}
\lim_{\alpha\to\infty}\int\limits_{\mathbb{R}}
\bigg(\int\limits_{\mathbb{R}}f(t)\tfrac{1}{\sqrt{\alpha}}
e^{-\pi\alpha(t-\tau)^2}w_{T}(t)dt\bigg)
e^{-\frac{\pi}{\alpha}\tau^2}\overline{f(\tau)}d\tau=
\langle{}f,f\rangle_{L^2(\mathbb{R},w_{T}(t)dt)}.
\end{equation}
\textsf{Step3}. Let us elaborate the second summand in \eqref{MulIn}, i.e. the expression
\begin{equation}
\label{ItInt2}
\int\limits_{\mathbb{R}} \bigg(\int\limits_{\mathbb{R}}\hat{f}(\omega)e^{-\frac{\pi}{\alpha}\omega^2}
e^{2\pi{}i\tau\omega}w_{\Omega}(\omega)d\omega\bigg)
e^{-\frac{\pi}{\alpha}\tau^2}\overline{f(\tau)}d\tau.
\end{equation}
Using the inequality
\[w_{\Omega}(\omega)\leq{}C_{\Omega}e^{\mu_{\Omega}|\omega|}, \ \forall\,\omega\in\mathbb{R},\]
which is the inequality \eqref{IWFEa}, we obtain
\begin{equation}
\label{Ker2}
\iint\limits_{\mathbb{R}\times\mathbb{R}}|\hat{f}(\omega)|
e^{-\frac{\pi}{\alpha}\omega^2}w_{\Omega}(\omega)
e^{-\frac{\pi}{\alpha}\tau^2}|f(\tau)|\,d\omega{}d\tau
\leq\iint\limits_{\mathbb{R}\times\mathbb{R}}
|\hat{f}(\omega)|K_{\Omega}(\omega,\tau)|f(\tau)|\,d\omega{}d\tau,
\end{equation}
where the kernel \(K_{\Omega}\) is of the form
\begin{equation}
\label{K2}
K_{\Omega}(\omega,\tau)=C_{\Omega}\exp\big(-\tfrac{\pi}{\alpha}\omega^2+\mu_{\Omega}|\omega|\big)
\exp\big(-\tfrac{\pi}{\alpha}\tau^2\big).
\end{equation}
Since
\[\max_{\omega\in\mathbb{R}}\exp\big(-\tfrac{\pi}{\alpha}\omega^2+\mu_{\Omega}|\omega|\big)
=\exp\big(\tfrac{\mu_{\Omega}^2\alpha}{4\pi}\big), \qquad\max_{\tau\in\mathbb{R}}
\exp\big(-\tfrac{\pi}{\alpha}\tau^2\big)=1,\]
the kernel \(K_{\Omega}(\omega,\tau)\), \eqref{K2}, admits two estimates:
\begin{subequations}
\label{EstK2}
\begin{align}
\label{EstK21}
K_{\Omega}(\omega,\tau)\leq{}C_{\Omega}\exp\big(-\tfrac{\pi}{\alpha}\omega^2+\mu_{\Omega}|\omega|\big),
\ \ &\forall\,\omega\in\mathbb{R},\,\forall\,\tau\in\mathbb{R},\\[-3.5ex]
\intertext{and}
\notag\\[-7.5ex]
\label{EstK22}
K_{\Omega}(\omega,\tau)\leq{}C_{\Omega}\exp\big(\tfrac{\mu_{\Omega}^2\alpha}{4\pi}\big)
\exp\big(-\tfrac{\pi}{\alpha}\tau^2\big),
\ \ &\forall\,\omega\in\mathbb{R},\,\forall\,\tau\in\mathbb{R}.
\end{align}
\end{subequations}
From \eqref{EstK21} it follows that
\begin{equation*}
\int\limits_{\mathbb{R}}K_{\Omega}(\omega,\tau)\,d\omega\leq{}I_1(\alpha), \ \ %
\forall\,\tau\in\mathbb{R},
\end{equation*}
where
\[
I_1(\alpha)=C_{\Omega}\int\limits_{\mathbb{R}}
\exp\big(-\tfrac{\pi}{\alpha}\omega^2+\mu_{\Omega}|\omega|\big)\,d\omega<\infty, \ \
\forall\,\alpha\in\mathbb{R}_{+}.
\]
From \eqref{EstK22} it follows that
\begin{equation*}
\int\limits_{\mathbb{R}}K_{\Omega}(\omega,\tau)\,d\tau\leq{}I_\infty(\alpha), \ \ %
\forall\,\omega\in\mathbb{R},
\end{equation*}
where
\[
I_\infty(\alpha)=C_{\Omega}\exp\big(\tfrac{\mu_{\Omega}^2\alpha}{4\pi}\big)\int\limits_{\mathbb{R}}
\exp\big(-\tfrac{\pi}{\alpha}\tau^2\big)\,d\tau<\infty, \ \
\forall\,\alpha\in\mathbb{R}_{+}.
\]
Applying Lemma \ref{ScTe} to the kernel \(K_{\Omega}(\omega,\tau)\), \eqref{K2}, we come
to the inequality
\[\iint\limits_{\mathbb{R}\times\mathbb{R}}|\hat{f}(\omega)|K_{\Omega}(\omega,\tau)|f(\tau)|\,dtd\tau
\leq \sqrt{I_1(\alpha)I_{\infty}(\alpha)}
\big\|\hat{f}\big\|_{L^2(\mathbb{R},d\omega)}
\big\|f\big\|_{L^2(\mathbb{R},d\tau)}<\infty.\]
Therefore the double integral in the left hand side of \eqref{Ker2} is finite for every \(\alpha\in\mathbb{R}_{+}\)
and for every \(\hat{f}\in{}L^2(\mathbb{R},d\omega)\), i.e. \(f\in{}L^2(\mathbb{R},d\tau)\), in particular, for every \(f\in\mathcal{H}_{w_{T},w_{\Omega}}\).

By Fubini's Theorem, the iterated integral \eqref{ItInt2} exists. Moreover we can
interchange the order of integration in this integral:
\begin{multline}
\label{ChOrd2}
\int\limits_{\mathbb{R}} \bigg(\int\limits_{\mathbb{R}}\hat{f}(\omega)e^{-\frac{\pi}{\alpha}\omega^2}
e^{2\pi{}i\tau\omega}w_{\Omega}(\omega)d\omega\bigg)
e^{-\frac{\pi}{\alpha}\tau^2}\overline{f(\tau)}d\tau\\=
\int\limits_{\mathbb{R}}\hat{f}(\omega)e^{-\frac{\pi}{\alpha}\omega^2}
\bigg(\int\limits_{\mathbb{R}}
e^{-\frac{\pi}{\alpha}\tau^2+2\pi{}i\omega\tau}\overline{f(\tau)}d\tau\bigg)w_{\Omega}(\omega)d\omega.
\end{multline}
The Fourier Transform of the exponential \(e^{-\frac{\pi}{\alpha}\tau^2+2\pi{}i\omega\tau}\),
considered as a function of the variable \(\tau\), is
\[\int\limits_{\mathbb{R}}e^{-\frac{\pi}{\alpha}\tau^2+2\pi{}i\omega\tau}
e^{-2\pi{}i\lambda\tau}d\tau=\sqrt{\alpha}e^{-\pi\alpha(\omega-\lambda)^2}.\]
In view of the Parseval equality,
\[\int\limits_{\mathbb{R}}e^{-\frac{\pi}{\alpha}\tau^2+2\pi{}i\omega\tau}
\overline{f(\tau)}d\tau=
\int\limits_{\mathbb{R}}\sqrt{\alpha}e^{-\pi\alpha(\omega-\lambda)^2}
\overline{\hat{f}(\lambda)}\,d\lambda.
\]
Thus the iterated integral in the right hand side of \eqref{ChOrd2} takes the form
\begin{multline*}
\int\limits_{\mathbb{R}}\hat{f}(\omega)e^{-\frac{\pi}{\alpha}\omega^2}
\bigg(\int\limits_{\mathbb{R}}
e^{-\frac{\pi}{\alpha}\tau^2+2\pi{}i\omega\tau}\overline{f(\tau)}d\tau\bigg)
w_{\Omega}(\omega)d\omega\\
=\int\limits_{\mathbb{R}}\hat{f}(\omega)e^{-\frac{\pi}{\alpha}\omega^2}
\bigg(\int\limits_{\mathbb{R}}\sqrt{\alpha}e^{-\pi\alpha(\omega-\lambda)^2}
\overline{\hat{f}(\lambda)}\,d\lambda\bigg)w_{\Omega}(\omega)\,d\omega.
\end{multline*}
Comparing the last equality with \eqref{ChOrd2}, we see that
\begin{multline*}
\int\limits_{\mathbb{R}} \bigg(\int\limits_{\mathbb{R}}\hat{f}(\omega)e^{-\frac{\pi}{\alpha}\omega^2}
e^{2\pi{}i\tau\omega}w_{\Omega}(\omega)d\omega\bigg)
e^{-\frac{\pi}{\alpha}\tau^2}\overline{f(\tau)}d\tau\\
=\int\limits_{\mathbb{R}}\hat{f}(\omega)e^{-\frac{\pi}{\alpha}\omega^2}
\bigg(\int\limits_{\mathbb{R}}\sqrt{\alpha}e^{-\pi\alpha(\omega-\lambda)^2}
\overline{\hat{f}(\lambda)}\,d\lambda\bigg)w_{\Omega}(\omega)\,d\omega.
\end{multline*}
Changing the variable \(\lambda\to\omega-\lambda\) in the inner integral of the right hand side
of the previous equality, we come to the equality
\begin{multline*}
\int\limits_{\mathbb{R}} \bigg(\int\limits_{\mathbb{R}}\hat{f}(\omega)e^{-\frac{\pi}{\alpha}\omega^2}
e^{2\pi{}i\tau\omega}w_{\Omega}(\omega)d\omega\bigg)
e^{-\frac{\pi}{\alpha}\tau^2}\overline{f(\tau)}d\tau\\
=\int\limits_{\mathbb{R}}\hat{f}(\omega)e^{-\frac{\pi}{\alpha}\omega^2}
\bigg(\int\limits_{\mathbb{R}}\sqrt{\alpha}e^{-\pi\alpha\lambda^2}
\overline{\hat{f}(\omega-\lambda)}\,d\lambda\bigg)w_{\Omega}(\omega)\,d\omega.
\end{multline*}

In other words,
\begin{equation}
\label{ToApId2}
\int\limits_{\mathbb{R}} \bigg(\int\limits_{\mathbb{R}}\hat{f}(\omega)e^{-\frac{\pi}{\alpha}\omega^2}
e^{2\pi{}i\tau\omega}w_{\Omega}(\omega)d\omega\bigg)
e^{-\frac{\pi}{\alpha}\tau^2}\overline{f(\tau)}d\tau=
\langle{}\hat{f},\mathscr{M}_{\alpha}\mathscr{I}_{\alpha}
\hat{f}\rangle_{L^2(\mathbb{R},w_{\Omega}(\omega)\,d\omega)},
\end{equation}
where \(\mathscr{I}_{\alpha}\) and \(\mathscr{M}_{\alpha}\) are the operators which were
introduced in Definition \ref{DApU}.
Since \(f\in\mathcal{H}_{w_T,w_{\omega}}\), the inclusion \eqref{ATRb} holds.
According to Lemma \ref{ApIdWN}, the family
\(\lbrace\mathscr{M}_{\alpha}\mathscr{I}_{\alpha}\rbrace_{\alpha\to\infty}\) is an approximative identity
in \(L^2(\mathbb{R},w_{\Omega}(\omega)d\omega)\):
\[
\lim_{\alpha\to\infty}\big\|(\mathscr{M}_{\alpha}\mathscr{I}_{\alpha}y)(\omega)
-y(\omega)
\big\|_{L^2(\mathbb{R},w_{\Omega}(\omega)d\omega)}=0, \ \
\forall\,y\in{}L^2(\mathbb{R},w_{\Omega}(\omega)d\omega).
\]
Taking \(\hat{f}(\omega)\) as \(y(\omega)\) in the last equality and comparing
with the equality \eqref{ToApId2}, we conclude that
\begin{equation}
\label{LS2}
\lim_{\alpha\to\infty}
\int\limits_{\mathbb{R}} \bigg(\int\limits_{\mathbb{R}}\hat{f}(\omega)e^{-\frac{\pi}{\alpha}\omega^2}
e^{2\pi{}i\tau\omega}w_{\Omega}(\omega)d\omega\bigg)
e^{-\frac{\pi}{\alpha}\tau^2}\overline{f(\tau)}d\tau=
\langle{}\hat{f},\hat{f}\rangle_{L^2(\mathbb{R},w_{\Omega}(\omega)d\omega)}.
\end{equation}

\textsf{Step 4}. Taking into account the limiting relations \eqref{LS1} and \eqref{LS2}, we pass to
the limit in the equality \eqref{MulIn} as \(\alpha\to\infty\). We obtain that
\begin{equation}
\label{Fin}
\langle{}f,f\rangle_{L^2(\mathbb{R},w_{T}(t)dt)}+
\langle{}\hat{f},\hat{f}\rangle_{L^2(\mathbb{R},w_{\Omega}(\omega)d\omega)}=0\,,
\ \ \textup{ i.e } \ \ \langle{}f,f\rangle_{\mathcal{H}_{w_{T},w_{\Omega}}}=0\,.
\end{equation}
\end{proof}
\section{A generalization.}
 The system of Gaussians is of the form
\begin{equation}
\label{nong}
g_{\alpha,\tau}(t)=\alpha\,g(\alpha(t-\tau)),
\end{equation}
where
\begin{equation}
\label{gau}
g(t)=e^{-\pi{}t^2}.
\end{equation}
We established that under certain non-degeneracy and regularity conditions imposed on
the weight functions \(w_T\) and \(w_\Omega\) the  system of Gaussians
\(g_{\alpha,\tau}\), where \(\alpha\) runs over \(\mathbb{R}_{+}\) and \(\tau\) runs over
\(\mathbb{R}\), is a complete system in the space \(\mathcal{H}_{w_T,w_{\Omega}}\).
However our reasoning remains true for more general functions \(g(t)\) than the function~%
\eqref{gau}. Let us formulate the appropriate generalization.

As before, we assume that the weight functions \(w_T\) and \(w_{\Omega}\) satisfy the non-degeneracy
condition (Definition \ref{DeCo}). We assume  also that the functions \(1+w_{T}(t)\),
\(1~+~w_{\Omega}(\omega)\) satisfy the regularity conditions (Definition \ref{DefReCo}).
In our formulation, the modules of continuity \(M_{1+w_{T}}\), \(M_{1+w_{\Omega}}\)
 (Definition \ref{DMMC}) corresponding to to the weight functions \(1+w_{T}\), \(1+w_{\Omega}\)
appear.
\begin{theorem} {\ }\\
We assume that a function \(g:\,\mathbb{R}\to\mathbb{C}\), \(g\in{}L^{1}(\mathbb{R},dt)\), is given which satisfy the conditions
\begin{subequations}
\label{ApIde}
\begin{align}
\label{ApIde1}
\int\limits_{\mathbb{R}}g(t)\,dt=1;\\
\label{ApIde2}
\lim_{\alpha\to\infty}\int\limits_{\mathbb{R}\setminus(-\delta,\delta)}
\alpha\,|g(\alpha\eta)|\sqrt{M_{1+w_T}(|\eta|)}\,d\eta=0& \ \ \ \textup{for each} \ \ \delta>0;\\
\label{ApIde3}
\lim_{\alpha\to\infty}\int\limits_{\mathbb{R}\setminus(-\delta,\delta)}
\alpha\,|g(\alpha\eta)|\sqrt{M_{1+w_{\Omega}}(|\eta|)}\,d\eta=0& \ \ \ \textup{for each} \ \ \delta>0.
\end{align}
\end{subequations}
Moreover we assume that the following conditions are fulfilled.
\begin{subequations}
\label{KT}
\begin{gather}
\label{KT1}
\int\limits_{\mathbb{R}}|g(\alpha\eta)|\,M_{1+w_{T}}(|\eta|)\,d\eta<\infty \ \ \
\textup{for each} \ \ \alpha>0;\\
\label{KT2}
\sup_{\eta\in\mathbb{R}}|\hat{g}(\alpha\eta)|\,M_{1+w_{T}}(|\eta|)<\infty \ \ \
\textup{for each} \ \ \alpha>0;
\end{gather}
\end{subequations}
and
\begin{subequations}
\label{KO}
\begin{gather}
\label{KO1}
\int\limits_{\mathbb{R}}|\hat{g}(\alpha\eta)|\,M_{1+w_{\Omega}}(|\eta|)\,d\eta<\infty \ \ \
\textup{for each} \ \ \alpha>0;\\
\label{KO2}
\sup_{\eta\in\mathbb{R}}|\hat{g}(\alpha\eta)|\,M_{1+w_{\Omega}}(|\eta|)<\infty \ \ \
\textup{for each} \ \ \alpha>0;
\end{gather}
\end{subequations}
Then
\begin{enumerate}
\item[\textup{1}.]
For each \(\alpha\in\mathbb{R}_{+}\), \(\tau\in\mathbb{R}\), the function \(g(\alpha(t-\tau))\)
belongs to the space \(\mathcal{H}_{w_T,w_{\Omega}}\).
\item[\textup{2}.]
The system of functions \(\lbrace{}g(\alpha(t-\tau))\rbrace_{\alpha,\tau}\), where \(\alpha\) runs over
\(\mathbb{R}_{+}\), \(\tau\) runs over \(\mathbb{R}\), is a complete system in the space
\(\mathcal{H}_{w_T,w_{\Omega}}\).
\end{enumerate}
\end{theorem}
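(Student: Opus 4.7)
The plan is to mimic the proof of Theorem \ref{GaIn} step by step, with the Gaussian kernel $\sqrt\alpha e^{-\pi\alpha\eta^2}$ and the Gaussian multiplier $e^{-\pi\eta^2/\alpha}$ replaced respectively by the abstract kernel $G_{\alpha}(\eta):=\alpha g(\alpha\eta)$ and by its Fourier transform $\hat g(\eta/\alpha)$. The hypotheses of the theorem have been packaged for exactly this substitution: \eqref{ApIde1} plays the role of \eqref{Norm}, the pair \eqref{ApIde2}--\eqref{ApIde3} plays the role of \eqref{conc} on the time and frequency sides respectively, while \eqref{KT1}--\eqref{KO2} provide the Schur-type integrability and boundedness needed to justify the Fubini interchanges.

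For Assertion~1 I would estimate $\int|g(\alpha(t-\tau))|^2 w_T(t)\,dt$ using the pointwise bound $w_T(t)\le(1+w_T(\tau))\,M_{1+w_T}(|t-\tau|)$, which follows from Definition \ref{DMMC} applied to the weight $1+w_T$; the change of variable $u=\alpha(t-\tau)$ then reduces finiteness to a combination of $g\in L^1(\mathbb{R},dt)$ with \eqref{KT1}--\eqref{KT2}, and the frequency side is handled analogously via $\widehat{g_{\alpha,\tau}}(\omega)=\hat g(\omega/\alpha)\,e^{-2\pi i\tau\omega}$ and \eqref{KO1}--\eqref{KO2}. For Assertion~2 I first re-examine Lemmas \ref{AppIdL} and \ref{ApIdWN}: their proofs depend on the Gaussian kernel only through the three features listed above, so, upon redefining the operators of Definition \ref{DApU} by substituting $G_\alpha=\alpha g(\alpha\cdot)$ for the convolution kernel and $\hat g(\cdot/\alpha)$ for the multiplier, the families $\{\mathscr I_\alpha\mathscr M_\alpha\}$ and $\{\mathscr M_\alpha\mathscr I_\alpha\}$ remain approximative identities in $L^2((1+w_T)dt)$ and $L^2((1+w_\Omega)d\omega)$ respectively. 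Then I would replay Steps~1--4 of the proof of Theorem \ref{GaIn}: starting from $\langle f,g_{\alpha,\tau}\rangle_{\mathcal H_{w_T,w_\Omega}}=0$ for all $\alpha,\tau$, multiply the integral form by $\overline{\hat g(\tau/\alpha)\,f(\tau)}$ and integrate over $\tau$; apply Schur's Lemma \ref{ScTe} (with kernels built from $|g|$, $|\hat g|$ and the exponential bounds \eqref{IWFE}) to justify Fubini; interchange the order of integration and apply Parseval on the frequency summand using the Fourier pair $\alpha g(\alpha\cdot)\leftrightarrow\hat g(\cdot/\alpha)$; recognize the resulting expression as
\[
\langle f,\mathscr I_\alpha\mathscr M_\alpha f\rangle_{L^2(w_T dt)}+\langle\hat f,\mathscr M_\alpha\mathscr I_\alpha\hat f\rangle_{L^2(w_\Omega d\omega)}=0;
\]
and pass to the limit $\alpha\to\infty$ to conclude $\|f\|^2_{\mathcal H_{w_T,w_\Omega}}=0$.

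The main obstacle I foresee is the bookkeeping in this third step. In the Gaussian case the self-duality of $e^{-\pi t^2}$ under the Fourier transform makes the multiplier $e^{-\pi\tau^2/\alpha}$ work symmetrically on both summands, whereas for a general $g$ one must separately verify that the natural multiplier $\overline{\hat g(\tau/\alpha)\,f(\tau)}$ converts each of the two summands into the correct operator inner product with respect to the correct weight. The conditions \eqref{KT1}--\eqref{KT2} are precisely what Schur's Lemma needs for the first summand and \eqref{KO1}--\eqref{KO2} for the second, but these must be combined carefully with the Parseval identity before the respective approximative identities can be invoked to let $\alpha\to\infty$.
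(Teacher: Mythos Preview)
Your proposal is correct and follows essentially the same route that the paper sketches in its Comments: redefine the convolution kernel as $G_\alpha(\eta)=\alpha g(\alpha\eta)$ and the multiplier as $\hat g(\cdot/\alpha)$, use \eqref{ApIde} to make $\{\mathscr I_\alpha\}$ (hence $\{\mathscr I_\alpha\mathscr M_\alpha\}$ and $\{\mathscr M_\alpha\mathscr I_\alpha\}$) approximative identities, and use \eqref{KT} and \eqref{KO} for the Schur estimates on the two double integrals replacing \eqref{Ker} and \eqref{Ker2}. Your identification of the only nontrivial point---that the loss of Gaussian self-duality forces separate tracking of $g$ and $\hat g$ (and of reflections and conjugates) in the two summands---is exactly right, and the conditions \eqref{KT1}--\eqref{KT2} and \eqref{KO1}--\eqref{KO2} are paired precisely to feed the two sides of the Schur test in each case.
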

\vspace{2.0ex}
\noindent
{\large\textbf{Comments}.}\\
1. The conditions \eqref{ApIde} ensure that the operator family \(\lbrace\mathscr{I}_{\alpha}\rbrace_{\alpha\in\mathbb{R}_{+}}\) (Definition \ref{DApU})
is an approximative identity
in each of the spaces \(L^2(\mathbb{R},w_{T}(t)dt\) and \(L^2(\mathbb{R},w_{\Omega}(\omega)d\omega\).\\
2. Since \(M_w(\eta)\geq1\) for any weight function \(w\), the condition \eqref{KO1} implies
that \(\int\limits_{\mathbb{R}}|\hat{g}(\eta)|\,d\eta<\infty\). \ \
Hence \(\sup_{\eta\in\mathbb{R}}|g(\eta)|<\infty\).
 From \eqref{KT1} it follows now that \(g(\alpha(t-\tau))\in{}L^2(\mathbb{R},w_{T}(t)dt)\)
 for each \(\alpha\in\mathbb{R}_{+},\tau\in\mathbb{R}\). From \eqref{KO} it follows that
\(\hat{g}(\alpha^{-1}\omega)\in{}L^2(\mathbb{R},w_{\Omega}(\omega)d\omega)\)
for each \(\alpha\in\mathbb{R}_{+}\). Thus  the function \(g(\alpha(t-\tau))\)
belongs to the space \(\mathcal{H}_{w_T,w_{\Omega}}\) for each \(\alpha\in\mathbb{R}_{+}\), \(\tau\in\mathbb{R}\).\\
3. The conditions \eqref{KT} are used to prove the convergence of the double integral analogous to the integral in \eqref{Ker}.\\
4. The congitions \eqref{KO} are used to prove the convergence of the double integral analogous to the integral in \eqref{Ker2}.

\end{document}